\theoremstyle{plain}
\newtheorem{thm}{Theorem}
\newtheorem{thm*}{Theorem}
\newtheorem{lem}[thm]{Lemma}
\theoremstyle{definition}
\newtheorem*{defn}{Definition}
\newtheorem*{defn*}{Definition}
\newtheorem*{conj*}{Conjecture}
\newtheorem*{question*}{Question}
\theoremstyle{remark}
\newtheorem*{rem}{Remark}
\newtheorem*{rem*}{Remark}
\newcommand{\C}{\mathbb{C}}
\newcommand{\CC}{\widehat {\mathbb{C}}}
\newcommand{\R}{\mathbb{R}}
\newcommand{\Z}{\mathbb{Z}}
\newcommand{\D}{\mathbb{D}}
\newcommand{\HH}{\mathbb{H}}
\newcommand{\E}{\mathbb{E}}
\newcommand{\eps}{\varepsilon}
\newcommand{\vphi}{\varphi}
\renewcommand{\phi}{\varphi}
\renewcommand{\epsilon}{\varepsilon}
\newcommand{\sminus}{\smallsetminus}
\newcommand{\calC}{\mathcal{C}}
\renewcommand{\Re}{\operatorname{Re}}
\renewcommand{\Im}{\operatorname{Im}}
\newcommand{\supp}{\operatorname{supp}}
\newcommand{\Area}{\operatorname{Area}}
\newcommand{\Jac}{\operatorname{Jac}}
\newcommand{\Res}{\operatorname{Res}}
\newcommand{\Cr}{\operatorname{Cr}}
\newcommand{\tinyhsp}{\hspace{.06667em}}
\title{Conformality of quasiconformal mappings at a point, revisited}
\author{Mitsuhiro Shishikura}
\address{Department of Mathematics, Kyoto University, 
Kyoto 606-8502, Japan. }
\thanks{This work was partially supported by JSPS Grant-in-Aid 26287016 and 15K13444.}
\keywords{quasiconformal mapping} 
\subjclass[2010]{30C62}
\begin{document}
\maketitle

\begin{abstract} 
We present a new and simple proof of Teichm\"uller-Wittich-Belinski\u \i's and Gutlyanski\u\i-Martio's theorems on the conformality of 
quasiconformal mappings at a given point.  Known proofs gave separate estimates for the radial and angular variations, 
but our proof unifies them using Gr\"otzsch-type inequality for the variation of cross-ratio of four points on the Riemann sphere.  
We also give a sufficient condition for $C^{1+\alpha}$-conformality 
\end{abstract}

\section*{Introduction}

Quasiconformal mappings are known to be differentiable almost everywhere with respect to the Lebesgue measure 
(see \cite{AhlforsQC}, \cite{LehtoVirtanen}).  
However if one picks a specific point, then the differentiability is not guaranteed.  
In this paper, we discuss the conformality 
(i.e. the differentiability with zero $\overline{z}$-derivative) of quasiconformal mappings at a given point.  

\begin{defn} 
For a quasiconformal mapping $f: \C \to \C$, we denote 
\begin{equation*}
\mu_f(z)  = \frac{f_{\bar z}(z)}{f_z(z)} 
= \frac{\  \frac{\partial f}{\partial \bar z}\  }{\frac{\partial f}{\partial z}}, \ \ 
K_f(z) = \frac{1+|\mu_f(z)|}{1-|\mu_f(z)|} 
\ \text{ and } \ 
K(f) = \operatorname{ess}\sup K_f(z).  
\end{equation*}

We say that $f$ is {\it conformal at} $z=z_0$ if the limit 
\begin{equation*} 
f'(z_0) = \lim_{z \to z_0} \frac{f(z)-f(z_0)}{z-z_0}
\end{equation*}
exists and is non-zero.
For simplicity, we only discuss the conformality at $z=0$, but the conformality 
at other points can be treated similarly by translating the coordinate.    
\end{defn}
There is a well-known criterion for the pointwise conformality: 

\begin{thm}[Teichm\"uller \cite{Teichmuller}, Wittich \cite{Wittich}, Belinski\u\i\,  \cite{Belinskii}, Lehto \cite{LehtoDiff}; see \cite{LehtoVirtanen} 
Theorem 6.1] \label{thm:TWB}
If $f$ is a quasiconformal mapping satisfying 
\begin{equation} \label{eq:TWBL} 
\frac {1}{2\pi} \iint_{|z|<r} \frac{|\mu_f(z)|}{|z|^2} dx\tinyhsp dy < \infty \ \text{ for some } r< \infty,  
\end{equation}
then $f$ is conformal at $z=0$. 
\end{thm}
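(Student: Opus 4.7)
The plan is to show that $f(z)/z$ satisfies a Cauchy criterion as $z\to 0$, hence converges to a nonzero $f'(0)$, by means of the cross-ratio Gr\"otzsch inequality promised in the abstract. Normalize so that $f(0)=0$ and extend $f$ to $\widehat{\C}$ with $f(\infty)=\infty$. The key observation is that for any $z,w\in\C\setminus\{0\}$ the four points $0,z,w,\infty$ have cross-ratio $z/w$ while their images have cross-ratio $f(z)/f(w)$, so the quantity $(f(z)/f(w))/(z/w)=(f(z)/z)/(f(w)/w)$ is exactly the cross-ratio distortion. A telescoping estimate showing this distortion is close to $1$ for pairs $(z,w)$ at adjacent dyadic scales will give convergence of $f(z)/z$ along a geometric sequence; comparing different arguments will give independence of direction.

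The main work is the cross-ratio Gr\"otzsch inequality itself. Since the extremal density for the modulus of a round annulus $\{a<|\zeta|<b\}$ is a multiple of $1/|\zeta|$, one expects, for $z,w$ in a bounded conformal position,
\begin{equation*}
\left|\log\frac{f(z)/f(w)}{z/w}\right|\leq C\iint_{A(z,w)}\frac{|\mu_f(\zeta)|}{|\zeta|^2}\,dx\,dy,
\end{equation*}
where $A(z,w)$ is an annular neighborhood containing both $z$ and $w$. The real part of the left-hand side (magnitude distortion) is controlled by the classical one-sided modulus Gr\"otzsch inequality for the annulus separating $\{0,z\}$ from $\{w,\infty\}$; the imaginary part (angular/rotational distortion) arises from the analogous inequality for a second pair-splitting of the four points on $\widehat{\C}$. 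The cross-ratio packaging delivers both pieces at once.

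With this inequality in hand, I take $z_n=2^{-n}\zeta_0$ on a fixed ray and apply it to consecutive pairs $(z_n,z_{n+1})$. The right-hand sides form a series dominated by $\iint_{|\zeta|<|\zeta_0|}|\mu_f|/|\zeta|^2\,dx\,dy$, which is finite by~(\ref{eq:TWBL}); hence $\sum\bigl|\log(f(z_{n+1})/z_{n+1})-\log(f(z_n)/z_n)\bigr|<\infty$, so $\log(f(z_n)/z_n)$ converges and $f(z_n)/z_n$ has a nonzero limit. Varying $\zeta_0$ and applying the inequality with $z,w$ at the same scale but different arguments shows the limit is independent of $\zeta_0$, producing $f'(0)$.

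The central obstacle is the second step: one must establish the cross-ratio Gr\"otzsch inequality in a form that captures both radial and angular distortion via the single integral against $|\mu_f|/|\zeta|^2$. The classical Teichm\"uller-Wittich-Belinski\u\i{} proofs use two distinct estimates---moduli of annuli for the radial piece and moduli of quadrilaterals (or a separate angular inequality) for the rotational one---and the paper's contribution is to fuse them through the complex cross-ratio. Once this unified inequality is available, the Cauchy extraction of $f'(0)$ is routine.
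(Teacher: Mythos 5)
Your overall architecture --- normalize $f(0)=0$, $f(\infty)=\infty$, measure $\log\frac{f(z_1)}{z_1}-\log\frac{f(z_2)}{z_2}$ as a variation of the cross-ratio of $0,z_1,z_2,\infty$, and conclude via a Cauchy criterion --- is exactly the paper's. But your central displayed inequality is false as stated, and the dyadic telescoping built on it cannot be repaired. The Gr\"otzsch-type bound for the cross-ratio variation is governed not by $|\zeta|^{-2}$ but by the quadratic differential $\vphi_{z_1,z_2}(z)=\frac{z_1}{z(z-z_1)(z-z_2)}$, which has \emph{simple poles at the marked points} $z_1,z_2$ (Theorem \ref{thm:FundIneq} and Lemma \ref{lem:JandKbar} give a bound by $\iint(K_f-1)|\vphi_{z_1,z_2}|$). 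Near $z_1$ one has $|\vphi_{z_1,z_2}|\sim\frac{1}{|z_1||z-z_1|}$, which is integrable but not dominated by $C/|z|^2$: if $\mu$ has modulus $k$ and is supported on $D(z_1,\eps|z_1|)$ with a suitably chosen phase, the first-order variation of the cross-ratio is of size $k\eps$, while $\iint_{A}|\mu|/|\zeta|^2\,dx\,dy\asymp k\eps^2$. So no constant $C$ makes your inequality true. The correct per-pair bound carries an extra term of size $(K(f)-1)\cdot O(\rho)$ coming from the $\rho|z_j|$-neighborhoods of the marked points, and this term does not shrink with the scale. Your telescoping then fails: taking $\mu$ of modulus $k$ supported on $\bigcup_n D(2^{-n}\zeta_0,\,n^{-1}2^{-n}|\zeta_0|)$ satisfies \eqref{eq:TWBL} (the integral is $\asymp k\sum n^{-2}$), yet the correct bound for the consecutive pair $(z_n,z_{n+1})$ is at least of order $k/n$, so the series of upper bounds diverges.

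The paper avoids this by \emph{not} telescoping. The Key Inequality (Lemma \ref{lem:KeyInequality}) is applied once to a pair $(z_1,z_2)$ with $|z_2|\le\delta_1|z_1|$ arbitrary, and Lemma \ref{lem:CharacterizeConformality} (Cauchy's criterion for such pairs) finishes the argument. Two cancellations make a single application work where iteration fails: the passage from the hyperbolic distance $d_\Omega$ on $\C\sminus\{0,1\}$ to $|\log\zeta_1-\log\zeta_2|_{\calC}$ costs a factor $\log\frac{1}{|\zeta_1|}=\log\frac{|z_1|}{|z_2|}$ (Lemma \ref{lem:LogVsPoincareDist}), and this is exactly absorbed by the normalization $J_*(z_1,z_2)\ge 2\pi|1-\tfrac{z_2}{z_1}|^{-1}\log\frac{|z_1|}{|z_2|}$ in the denominator of the normalized dilatation (Lemma \ref{lem:JandKbar}); and the pole contributions $(K(f)-1)O(\rho)$ are incurred only once, so they can be made $<\eps/2$ by fixing $\rho$ small before letting $z_1\to 0$ (this is precisely the Remark following the proof of Lemma \ref{lem:EstimateOnJ}). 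Alternatively, Theorem \ref{thm:TWB} is in the paper simply a corollary of Theorem \ref{thm:GutMartio}, since \eqref{eq:TWBL} implies \eqref{eq:muSquareIntegrability} and the absolute convergence of the integral in \eqref{eq:muIntegrability}. Your radial/angular heuristic in the second paragraph is reasonable, but the actual unification in the paper goes through the hyperbolic metric of the thrice-punctured sphere and the torus double cover, not through two separate modulus inequalities.
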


This theorem was improved by: 
\begin{thm} [Gutlyanski\u\i-Martio \cite{GutMartio}] 
\label{thm:GutMartio} 
Let $f: \C \to \C$ be a quasiconformal mapping.  
If 
\begin{equation} \label{eq:muSquareIntegrability} 
\iint_{|z|<1} \frac{|\mu_f(z)|^2}{1-|\mu_f(z)|^2} \, \frac{dx \tinyhsp dy}{|z|^2} < \infty   
\end{equation}
and the limit 
\begin{equation} \label{eq:muIntegrability} 
\lim_{ r \searrow 0} \iint_{r<|z|<1} \frac{\mu_f(z)}{1-|\mu_f(z)|^2} \, \frac{dx\tinyhsp dy}{z^2} 
\end{equation}
exists, then $f$ is conformal at $z=0$.  
\end{thm}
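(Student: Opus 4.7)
The plan is to show that conformality of $f$ at $0$ is equivalent to the asymptotic preservation, under $f$, of a cross-ratio of four points of the Riemann sphere straddling the origin, and then to control this cross-ratio distortion quantitatively using a Gr\"otzsch-type inequality whose main term is the integral in \eqref{eq:muIntegrability} and whose error is controlled by \eqref{eq:muSquareIntegrability}.

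After post-composing with a M\"obius transformation (which does not affect conformality at $0$), we may assume $f(\infty)=\infty$. Then for small $z,w$ with bounded ratio one has
\begin{equation*}
[f(0),f(\infty);f(z),f(w)] = \frac{f(z)}{f(w)}, \qquad [0,\infty;z,w]=\frac{z}{w},
\end{equation*}
so conformality of $f$ at $0$ is equivalent to $f(z)/f(w)-z/w\to 0$ as $z,w\to 0$; that is, the cross-ratio of $(0,\infty,z,w)$ is asymptotically preserved by $f$.

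The central tool is a Gr\"otzsch-type identity to be established: for any quasiconformal $f$ and distinct $p_1,\dots,p_4\in\CC$,
\begin{equation*}
\log\frac{[f(p_1),f(p_2);f(p_3),f(p_4)]}{[p_1,p_2;p_3,p_4]} = \iint_{\C} \frac{\mu_f(\zeta)}{1-|\mu_f(\zeta)|^2}\,\Phi(\zeta)\,dA(\zeta) + R,
\end{equation*}
where $\Phi = \Phi_{p_1,\dots,p_4}$ is the natural rational variation-of-cross-ratio kernel (as suggested by the Ahlfors-Bers infinitesimal formula) and the remainder $R$ is controlled quadratically in $\mu_f$, for instance by a Cauchy-Schwarz estimate $|R|^2 \le C\iint |\mu_f|^2/(1-|\mu_f|^2)\cdot|\Phi|^2\,dA$. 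This is where the unified treatment announced in the abstract should enter: the radial and angular components of the distortion are collapsed into a single complex-valued kernel integral, avoiding the usual separate bookkeeping.

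Taking $(p_1,p_2,p_3,p_4)=(0,\infty,z,w)$, the kernel $\Phi$ behaves like $1/\zeta^2$ in the bulk region $|\zeta|\gg\max(|z|,|w|)$ and decays at $\infty$, precisely matching the integrand of \eqref{eq:muIntegrability}. Hypothesis \eqref{eq:muIntegrability} then provides convergence of the main integral as $z,w\to 0$, while \eqref{eq:muSquareIntegrability} forces the quadratic remainder $R$ to vanish. This yields $\log(f(z)/f(w)) - \log(z/w) \to 0$, hence $f(z)/z$ is Cauchy and converges to a non-zero limit, giving the desired conformality.

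The main obstacle is the actual proof of the Gr\"otzsch-type cross-ratio inequality with the right quadratic remainder control. One expects this to come from an extremal length / modulus argument applied to a ring domain separating $\{p_1,p_2\}$ from $\{p_3,p_4\}$, combined with a sharp distortion formula for the modulus of the image ring $f(A)$ in terms of a weighted integral of $\mu_f$. Handling the singular behavior of $\Phi$ near the four distinguished points, so that the singular contributions are absorbed into the quadratic remainder rather than the main term, is the delicate technical step.
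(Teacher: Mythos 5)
Your overall philosophy (reduce conformality to asymptotic preservation of the cross-ratio of $0,\infty,z,w$ and control the distortion by a Gr\"otzsch-type inequality with main term \eqref{eq:muIntegrability} and quadratic error tied to \eqref{eq:muSquareIntegrability}) is the same as the paper's, but there are two genuine gaps, one in the reduction and one in the central estimate, and neither is a routine fix.

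First, the reduction is not correct as stated. Preservation of $f(z)/f(w)-z/w$ for pairs with \emph{bounded} ratio $z/w$ does not imply conformality at $0$: a map behaving like $z\mapsto z\,e^{i\sin\log\log(1/|z|)}$ near the origin satisfies $f(\lambda w)/f(w)\to\lambda$ for each fixed $\lambda$ (uniformly for $\lambda$ in compacta of $\C^*$), yet $f(z)/z$ has no limit. To run a Cauchy argument one must allow $|z_2|/|z_1|$ arbitrarily small (the paper's Lemma \ref{lem:CharacterizeConformality} uses the configuration $0<|z_2|\le\delta_1|z_1|$ and a third, much smaller point). But then the cross-ratio $\zeta_1=z_2/z_1$ approaches the puncture $0$ of $\Omega=\C\setminus\{0,1\}$, and converting the hyperbolic distance $d_\Omega(\zeta_1,\zeta_2)$ (which is what any modulus/extremal-length argument controls) into the quantity $|\log\zeta_1-\log\zeta_2|$ costs a factor $\log(1/|\zeta_1|)=\log(|z_1|/|z_2|)$ that blows up (Lemma \ref{lem:LogVsPoincareDist}). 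The heart of the paper's proof is that this factor is exactly cancelled by the lower bound $J_*(z_1,z_2)\gtrsim\log(|z_1|/|z_2|)$ on the mass of the quadratic differential $\vphi_{z_1,z_2}$ appearing in the denominator of $\overline K_f$ (Lemma \ref{lem:JandKbar}). Your proposal never confronts this divergence.

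Second, the claimed identity $\log(\Cr'/\Cr)=\iint\frac{\mu}{1-|\mu|^2}\Phi\,dA+R$ with $|R|^2\le C\iint\frac{|\mu|^2}{1-|\mu|^2}|\Phi|^2\,dA$ is not only unproven but cannot hold in that form: $|\Phi|^2$ has non-integrable singularities $|\zeta-p_j|^{-2}$ at the four marked points (and the expression is not scale-invariant), so the proposed remainder bound is generically infinite and in any case is not controlled by \eqref{eq:muSquareIntegrability}. The correct quadratic term, as in the paper's $J(\mu;z_1,z_2)$, involves the \emph{first} power $|\vphi_{z_1,z_2}|$, and the correct statement is an inequality for $d_\Omega$ in terms of $\overline K_f$, proved by the length--area method on the torus double cover branched over the four points (Theorem \ref{thm:FundIneq} and Appendix A), not a first-variation identity. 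A smaller inaccuracy: the kernel behaves like $1/\zeta^2$ in the intermediate annulus $|w|\ll|\zeta|\ll|z|$, not in the region $|\zeta|\gg\max(|z|,|w|)$ where it decays like $|\zeta|^{-3}$; getting from the $(z,w)$-dependent kernel to the fixed integrals \eqref{eq:muSquareIntegrability}--\eqref{eq:muIntegrability} still requires the decomposition and H\"older estimates of Lemma \ref{lem:EstimateOnJ}.
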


The goal of this paper is to give a new and simple proof of this theorem.  
(Note that in \cite{GutMartio}, it was assumed that \eqref{eq:muSquareIntegrability} holds without $1-|\mu_f(z)|^2$ in the denominator, 
but this is equivalent for a qc-mapping.)
The proof of Theorem \ref{thm:TWB} consists of 
the differentiability of the absolute value $|f(z)|$ 
(e.g. Teichm\"uller \cite{Teichmuller}, Wittich \cite{Wittich}; see \cite{LehtoVirtanen} 
Lemma 6.1), and the estimate the variation of $\arg \frac{f(z)}{z}$ 
(e.g. Belinski\u\i\,  \cite{Belinskii}, Lehto \cite{LehtoDiff}; see \cite{LehtoVirtanen} Lemma 6.2).  
The proof of Theorem \ref{thm:GutMartio} in \cite{GutMartio} also gave the estimates for the absolute value and the argument.  
\par
Our approach 
unifies the two estimates 
into the form of the variation of cross-ratio  of four points $0, z_1, z_2, \infty$, via Cauchy's criterion (see Lemmas \ref{lem:CharacterizeConformality} and \ref{lem:LogVsPoincareDist}).  
The effect of quasiconformal mapping is usually measured by the integral of $\mu_f$ paired with a suitable quadratic differential.  
In our case, the quadratic differential to consider is $\vphi_{z_1,z_2 \vphantom{|}}(z)dz^2$, where 
\begin{equation} \label{eq:DefnPhi} 
\vphi_{z_1,z_2 \vphantom{|}}(z) = \frac {z_1}{z(z-z_1)(z-z_2)}.
\end{equation}
The quasiconformal variation of cross-ratio is formulated in Theorem \ref{thm:FundIneq}, and
the Main Theorem \ref{thm:MainThm} is stated in terms of the integral $J(\mu; z_1, z_2)$ defined by \eqref{eq:DefnJ} using $\vphi_{z_1,z_2 \vphantom{|}}$.  
Heuristically when $|z_2| \ll |z_1|$, in the annular region in-between, $|z_2| \ll |z| \ll |z_1|$, 
the quadratic differential $\vphi_{z_1,z_2 \vphantom{|}}(z)dz^2$ ``looks like'' 
$c\frac{dz^2}{z^2}$, and this explains the appearance of $\frac{1}{z^2}$ in Theorems \ref{thm:TWB} and \ref{thm:GutMartio}.  
(See \cite{HSS}, for a decomposition theorem of quadratic differentials, in which this idea was extensively used.) 
This observation will be justified by the estimates on integrals (Lemmas \ref{lem:EstimateOnJ} and \ref{lem:EstimateOnIps}) 
via the decomposition \eqref{eq:QuadDiffDecomposition}.

Moreover we can also derive a more quantitative estimate on the remainder term: 
\begin{thm} \label{thm:HoelderConformal} 
Let $f:\C \to \C$ be a quasiconformal mapping and suppose that 
\begin{equation} \label{eq:HoelderConformalityAssumption}
I(r) = \iint_{\{z: |z|<r\}} \frac{|\mu_f(z)|}{1-|\mu_f(z)|^2} \frac{dx \tinyhsp dy}{|z|^2} 
\text{  is finite and has order   }  
O(r^\beta) \ (r \searrow 0) 
\text{   for some $\beta>0$. }
\end{equation}
Then 
for any $0<\alpha<\frac{\beta}{2+\beta}$, $f$ is $C^{1+\alpha}$-conformal at $0$ in the sense that  
\begin{equation} \label{eq:HoelderConformality} 
f(z)= f(0) + f'(0)z + O(|z|^{1+\alpha}) 
\quad \text{ as } z \to 0.    
\end{equation}
\end{thm}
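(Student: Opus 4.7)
The plan is to refine the proof of the Main Theorem \ref{thm:MainThm} by tracking quantitative rates throughout. First, by Theorem \ref{thm:MainThm}, $f$ is already conformal at $0$ under the hypothesis \eqref{eq:HoelderConformalityAssumption}, so $f'(0)$ exists; after normalizing by $f(0)=0$, $f(\infty)=\infty$, $f'(0)=1$, the desired estimate \eqref{eq:HoelderConformality} becomes $\log(f(z)/z)=O(|z|^\alpha)$ as $z\to 0$. By Lemma \ref{lem:CharacterizeConformality} this reduces to showing that the cross-ratio ratio $[f(0),f(z_1),f(z_2),f(\infty)]/[0,z_1,z_2,\infty]$ tends to $1$ at rate $O(|z_1|^\alpha)$, uniformly for $|z_2|\leq|z_1|$; and by Theorem \ref{thm:FundIneq} this is in turn controlled by $J(\mu;z_1,z_2)$ (up to higher-order corrections that are harmless since $|\mu_f|\leq K<1$).

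\medskip

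The heart of the proof is then a quantitative bound on $J(\mu;z_1,z_2)$, which I would obtain by decomposing the integration domain as in the proof of Lemma \ref{lem:EstimateOnJ} via \eqref{eq:QuadDiffDecomposition}. In the middle annulus $|z_2|<|w|<|z_1|$, where $|\vphi_{z_1,z_2}(w)|\lesssim 1/|w|^2$, the hypothesis $I(r)=O(r^\beta)$ gives the bound $O(|z_1|^\beta)$ directly. In the inner disk $|w|<|z_2|$, where $|\vphi_{z_1,z_2}(w)|\lesssim 1/(|w|\,|z_2|)$, a Cauchy--Schwarz estimate against $|\mu(w)|/|w|$ (whose $L^2$-norm is controlled by $I(|z_2|)^{1/2}$) yields a bound of order $O(|z_2|^{\beta/2})$.

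\medskip

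The main obstacle is the outer region $|w|>|z_1|$, where $|\vphi_{z_1,z_2}(w)|\lesssim |z_1|/|w|^3$ but a naive bound using only $|\mu_f|\leq K$ produces merely $O(1)$. I would overcome this by splitting the outer region further at an intermediate radius $|w|=|z_1|^{1-s}$ for a parameter $s\in(0,1)$: for the part with $|z_1|<|w|\leq|z_1|^{1-s}$ one keeps the bound $|\vphi_{z_1,z_2}|\lesssim 1/|w|^2$ and invokes the hypothesis to obtain $O(|z_1|^{\beta(1-s)})$; for $|w|>|z_1|^{1-s}$ one exploits the pointwise decay of $\vphi_{z_1,z_2}$ together with $|\mu_f|\leq K$ to obtain a positive power of $|z_1|$ depending on $s$. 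After choosing $|z_2|$ as a suitable power of $|z_1|$ and optimizing $s$ against the constraints from all three regions, balancing the resulting exponents yields $J(\mu;z_1,z_2)=O(|z_1|^\alpha)$ for any $\alpha<\beta/(2+\beta)$. Telescoping this bound along a geometric sequence of scales $|z_1|=2^{-k}r_0$ and summing the geometric series then gives $\log(f(z)/z)=O(|z|^\alpha)$, which is equivalent to \eqref{eq:HoelderConformality}.
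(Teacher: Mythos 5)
Your overall architecture is the same as the paper's: reduce via Theorem \ref{thm:MainThm} (equivalently the Key Inequality) to showing $J(\mu_f;z_1,z_2)=O(|z_1|^\alpha)$, and prove that by splitting $\C$ into an inner disk, a middle annulus and an outer region using the decomposition \eqref{eq:QuadDiffDecomposition}. The paper, however, gets the rate almost for free from the already-proved Lemmas \ref{lem:EstimateOnJ} and \ref{lem:EstimateOnIps}: the first terms of \eqref{eq:EstimateOnJ1}--\eqref{eq:EstimateOnJ2} are $O(r^\beta)$ directly from the hypothesis, and choosing $s=2$, $p>2$ close to $2$, and $r'=r^{2/(2+\beta)}$ in \eqref{eq:EstimateOnIps} makes both terms there $O(r^{2\beta/(2+\beta)})$, whence $I_{p,s}(\mu;r)^{1/p}=O(r^\alpha)$ for any $\alpha<\frac{\beta}{2+\beta}$.

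The genuine gap in your version is the treatment of the simple poles of $\vphi_{z_1,z_2}$ at $z_1$ and $z_2$. Your pointwise bounds ignore them: on the middle annulus $\{|z_2|<|w|<|z_1|\}$ the estimate $|\vphi_{z_1,z_2}(w)|\lesssim |w|^{-2}$ fails near $w=z_1$ and $w=z_2$, where $|\vphi_{z_1,z_2}(w)|\sim (|z_j|\,|w-z_j|)^{-1}$, and likewise $|\vphi_{z_1,z_2}(w)|\lesssim (|w|\,|z_2|)^{-1}$ fails near $w=z_2$ on the inner disk. This is not a cosmetic issue, for two reasons. First, your Cauchy--Schwarz step sits exactly at the borderline exponent: the conjugate factor would have to contain $\iint |w-z_j|^{-2}\,dx\,dy$ over a neighborhood of the pole, which diverges; this is precisely why the paper uses H\"older with $p>2$, $q<2$ in Lemma \ref{lem:EstimateOnJ}, so that $\iint |w-z_j|^{-q}$ converges. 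Second, the contribution of a disk of radius $\eps|z_j|$ about $z_j$ to $\iint |\mu|\,|\vphi_{z_1,z_2}|\,dx\,dy$ is of order $\eps\,||\mu||_\infty$, which does not tend to $0$ as $z_1\to 0$ if one only uses $|\mu_f|\le k<1$ there; one must exploit the integrated smallness of $\mu$ near $0$ at the poles as well, and that is exactly the role of the weighted quantity $I_{p,s}$ in \eqref{eq:DefnIps}. Your splitting of the outer region at $|w|=|z_1|^{1-s}$ is a workable substitute for the weight $\bigl(1+\frac{|z|}{r}\bigr)^s$, and the telescoping over geometric scales at the end is fine, but without a correct handling of the poles the claimed bound $J(\mu;z_1,z_2)=O(|z_1|^\alpha)$ is not established, and neither is the exact threshold $\alpha<\frac{\beta}{2+\beta}$, which in the paper emerges from letting $p\searrow 2$ after balancing $r'$ against $r$.
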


\begin{rem} 
Schatz \cite{Schatz} obtained a similar result by assuming a stronger conditipn $\iint \frac{|\mu(z)|}{|z|^p}dx\,dy<\infty$ ($p>2$), 
which implies \eqref{eq:HoelderConformalityAssumption} with $\beta=p-2$. 
McMullen \cite{McMullenRenorm2} (Theorem 2.25)
obtained the same conclusion by assuming $\Area(B_r(0) \cap \supp \mu )= O(r^{2+\alpha})$, which is 
again stronger.  
\par
\end{rem}

For further references, see also \cite{ReichWalczak}, \cite{Drasin}, \cite{BrakalovaJenkins}.

The author would like to thank Kari Astala, David Drasin, Frederick Gardiner, Anatoly Golberg for helpful discussions.  

\section{Conformality at $z=0$ and cross-ratio}

We start with nothing but Cauchy's criterion.  
Define the cylinder $\calC=\C/2\pi i \Z$ and its distance $|w|_{\calC}=\inf \{ |w+2\pi i n| : \, n \in \Z\}$.

\begin{lem} \label{lem:CharacterizeConformality}  
Let $f:\C \to \C$
be an orientation-preserving homeomorphism with $f(0)=0$ and fix a constant $0<\delta_1 \le 1$.  
Then the following are equivalent: 
\begin{enumerate}
\renewcommand{\labelenumi}{{\rm (\alph{enumi})}}
\item $f$ is conformal at $z=0$;
\item there exists a limit $\lim_{z \to 0} \log \frac{f(z)}{z}$ for a suitable choice of branch of $\log$;
\item for any $\eps>0$, there exists $r>0$ such that 
if $0<|z_1|<r$ and $0<|z_2| \le \delta_1 |z_1|$, then 
\begin{equation} \label{eq:CauchyCriterion} 
\left| \, \log \frac{f(z_1)}{z_1} - \log \frac{f(z_2)}{z_2} \right|_{\calC} < \eps.     
\end{equation}
\end{enumerate}

\begin{proof} The most of implications are obvious, and we only prove that (c) implies (b).  
First take $\eps=\frac{\pi}{2}$, then (c) implies that the variation of the argument of $\frac{f(z)}{z}$ 
is less than $\pi$ when $z$ is small.  In such a case,  the distance $|\cdot|_{\calC}$ in \eqref{eq:CauchyCriterion} can be replaced by 
the Euclidean distance.  Now take smaller $\eps$, and let $r$ be as in (c).  
If $|z_1|, |z_2|<r$, then take the third point $z_3$ so that $|z_3| \le \delta_1 |z_1|$, $|z_3| \le \delta_1 |z_2|$, 
then $\left| \, \log \frac{f(z_1)}{z_1} - \log \frac{f(z_2)}{z_2} \right|_{\calC} 
\le \left| \, \log \frac{f(z_1)}{z_1} - \log \frac{f(z_3)}{z_3} \right|_{\calC} + \left| \, \log \frac{f(z_2)}{z_2} - \log \frac{f(z_3)}{z_3} \right|_{\calC} < 2 \eps$.  
By Cauchy's criterion, we have (b).  
(In fact, for any sequence $z_n \to 0$,  $\left\{ \log \frac{f(z_n)}{z_n} \right\}$ will be a Cauchy sequence in $\calC$, hence it is convergent, and this implies 
the convergence of $\log \frac{f(z)}{z}$ as $z \to 0$.)
\end{proof}

\end{lem}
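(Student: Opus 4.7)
The plan is to prove the cycle (a) $\Leftrightarrow$ (b) and (b) $\Leftrightarrow$ (c), where the only nontrivial link is (c) $\Rightarrow$ (b).

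The equivalence (a) $\Leftrightarrow$ (b) is essentially tautological via $\exp$ and $\log$: if $f(z)/z \to f'(0) \ne 0$, then any branch of $\log$ defined near $f'(0)$ gives $\log(f(z)/z) \to \log f'(0)$; conversely, if some branch of $\log(f(z)/z)$ has a limit $L$, then $f(z)/z \to e^L \ne 0$ by continuity of the exponential. The implication (b) $\Rightarrow$ (c) is immediate from the triangle inequality, since $|\cdot|_\calC \le |\cdot|$.

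The substance lies in (c) $\Rightarrow$ (b). The obstacle is that $|\cdot|_\calC$ is a quotient distance on $\C/2\pi i \Z$, so a priori small cylinder distance permits $2\pi i$-jumps, and one cannot immediately read off Cauchy behavior of a single continuous branch of $\log(f(z)/z)$. To handle this, I would first apply (c) with the specific choice $\eps = \pi/2$, producing some $r_0 > 0$ such that $\left| \log \frac{f(z_1)}{z_1} - \log \frac{f(z_2)}{z_2} \right|_\calC < \pi/2$ whenever $0 < |z_2| \le \delta_1|z_1| < r_0$. Since $f$ is a homeomorphism with $f(0)=0$, the quantity $f(z)/z$ is continuous and nonzero on $0 < |z| < r_0$, so we can fix a continuous branch of $\log(f(z)/z)$ there. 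The $\pi/2$-bound, transported along paths of the form $z_1 \mapsto \delta_1 z_1 \mapsto \delta_1^2 z_1 \mapsto \cdots$, prevents the argument from accumulating wrapping, so on the relevant pairs the cylinder distance actually equals the Euclidean distance.

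Once this uniformization is set, I would finish by an ordinary Cauchy argument. Given any $\eps > 0$, apply (c) a second time to get an $r \le r_0$. For two points $z_1, z_2$ with $0 < |z_1|, |z_2| < r$, introduce an auxiliary $z_3$ of still smaller modulus, $|z_3| \le \delta_1 \min(|z_1|,|z_2|)$, and combine (c) for the pairs $(z_1,z_3)$ and $(z_2,z_3)$ by the triangle inequality, now in the Euclidean sense, to obtain $\left| \log \frac{f(z_1)}{z_1} - \log \frac{f(z_2)}{z_2} \right| < 2\eps$. This is Cauchy's criterion for $\log(f(z)/z)$ as $z \to 0$, yielding (b). The main obstacle is therefore the first step, namely extracting a genuine branch of $\log$ out of the quotient statement; everything after that is routine.
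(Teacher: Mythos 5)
Your proposal is correct and follows essentially the same route as the paper: the only nontrivial implication is (c) $\Rightarrow$ (b), handled first by taking $\eps=\pi/2$ to rule out $2\pi i$-ambiguity and upgrade the cylinder distance to the Euclidean one, then by inserting an auxiliary point $z_3$ of smaller modulus and applying the triangle inequality to get Cauchy's criterion. The paper's proof is the same argument, so there is nothing further to compare.
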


\begin{defn} 
For distinct points $z_1, z_2, z_3, z_4$ in $\C$, define the {\it cross-ratio} by 
\begin{equation*} 
\Cr( z_1, z_2, z_3, z_4 ) = 
\frac{z_1 - z_3}{z_2-z_3} \cdot \frac{z_2-z_4}{z_1 - z_4}.  
\end{equation*}
This definition extends to the case where one of $z_j$'s is $\infty$ by taking the limit.  
The cross-ratio belongs to the three punctured sphere 
$\Omega := \CC \sminus \{0, 1, \infty\} = \C \sminus \{0,1\}$.  Denote the 
hyperbolic distance on $\Omega$ by $d_\Omega(\cdot, \cdot)$, 
which is induced from $\frac{|dz|}{\Im z}$ on the universal cover $\HH$.  
\end{defn}

\smallskip

Let $f$ be as in Lemma \ref{lem:CharacterizeConformality} and take $z_1, z_2 \in \C \sminus \{0\}$ with $z_1 \neq z_2$.  
Denote $\zeta_1 = \frac{z_2}{z_1} = \Cr(z_1, z_2, \infty, 0)$ and 
$\zeta_2 = \frac{f(z_2)}{f(z_1)} = \Cr(f(z_1), f(z_2),\infty, 0)$.  
We need to estimate 
\begin{equation*} 
\left| \log \frac{f(z_1)}{z_1} - \log \frac{f(z_2)}{z_2} \right|_{\calC} 
= \left| \log \zeta_1 - \log \zeta_2 \right|_{\calC}  
= \left| \log \Cr(z_1, z_2, \infty, 0) - \log \Cr(f(z_1), f(z_2), \infty, 0) \right|_{\calC}.   
\end{equation*}

\begin{lem} 
\label{lem:LogVsPoincareDist} 
For any $L > 0$, there exist constants $C_1>0$ and $0<\delta_1<1$ such that 
if $\zeta_1, \zeta_2 \in \Omega = \C \sminus \{0,1\}$ satisfy $|\zeta_1|<\delta_1$ 
and $d_\Omega(\zeta_1,\zeta_2) \le L$,  
then 
\begin{equation*} 
\left| \log \zeta_1 - \log \zeta_2 \right|_{\calC} 
\le C_1 d_\Omega(\zeta_1,\zeta_2) \cdot \log \frac{1}{|\zeta_1|}. 
\end{equation*}
\end{lem}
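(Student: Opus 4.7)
The plan is to bound the cylinder distance by integrating $\frac{|d\zeta|}{|\zeta|}$ along a hyperbolic geodesic and comparing to $d_\Omega$. Since the quotient map $\C\to\calC$ is a local isometry, lifting any continuous branch of $\log$ along the hyperbolic geodesic $\gamma$ from $\zeta_1$ to $\zeta_2$ in $\Omega$ immediately gives
\begin{equation*}
|\log\zeta_1-\log\zeta_2|_{\calC}\le \int_\gamma \frac{|d\zeta|}{|\zeta|}.
\end{equation*}
The key ingredient is the well-known asymptotic behavior of the hyperbolic density of $\Omega$ near the puncture $\zeta=0$,
\begin{equation*}
\rho_\Omega(\zeta)\asymp \frac{1}{|\zeta|\log\frac{1}{|\zeta|}}\qquad (\zeta\to 0),
\end{equation*}
obtained from Schwarz--Pick by sandwiching $\Omega$ between a punctured disk (above) and its universal cover restricted near the cusp (below). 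I would fix $r_0\in(0,1)$ and $C>1$ so that this two-sided comparison holds on $U:=\{0<|\zeta|<r_0\}$.

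The central auxiliary step is to observe that $u(\zeta):=\log\log\frac{1}{|\zeta|}$ is $C$-Lipschitz on $U$ with respect to $d_\Omega$: its Euclidean differential satisfies $|du|=\frac{|d\zeta|}{|\zeta|\log\frac{1}{|\zeta|}}\le C\,\rho_\Omega(\zeta)|d\zeta|$. Choose $\delta_1\in(0,r_0)$ small enough that $|\zeta_1|<\delta_1$ forces $u(\zeta_1)+CL<\log\log\frac{1}{r_0}$. A continuity/connectedness bootstrap then confines the hyperbolic ball $B_\Omega(\zeta_1,L)$ to $U$, since any path from $\zeta_1$ reaching $\partial U$ would yield a $u$-value of $\log\log\frac{1}{r_0}$, contradicting the Lipschitz bound. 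Consequently $\gamma\subset U$ and along $\gamma$,
\begin{equation*}
\log\tfrac{1}{|\zeta|}\le e^{CL}\log\tfrac{1}{|\zeta_1|}.
\end{equation*}

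Combining the two ingredients,
\begin{equation*}
\int_\gamma \frac{|d\zeta|}{|\zeta|}=\int_\gamma \log\tfrac{1}{|\zeta|}\cdot \frac{|d\zeta|}{|\zeta|\log\frac{1}{|\zeta|}}\le C e^{CL}\log\tfrac{1}{|\zeta_1|}\int_\gamma\rho_\Omega(\zeta)|d\zeta|=C_1\, d_\Omega(\zeta_1,\zeta_2)\log\tfrac{1}{|\zeta_1|},
\end{equation*}
with $C_1=Ce^{CL}$. The main technical subtlety is the bootstrap argument that keeps $\gamma$ inside the region of validity of the hyperbolic asymptotic; once the Lipschitz property of $u$ is in hand, this is essentially automatic, and it is the reason one cannot absorb the dependence on $L$ into $C_1$. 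Everything else is routine comparison of conformal densities.
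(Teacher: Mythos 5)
Your argument is essentially the paper's proof in a light disguise: the paper also integrates $\frac{|d\zeta|}{|\zeta|}$ along the geodesic, uses the lower bound $\rho_\Omega(\zeta)\ge \frac{C_0}{|\zeta|\log\frac{1}{|\zeta|}}$ near the puncture, and confines $\gamma$ to an annulus $|\zeta_1|^\nu\le|\zeta|\le|\zeta_1|^{1/\nu}$ with $\nu=e^{L/C_0}$ by exactly the observation that $\log\log\frac{1}{|\zeta|}$ is Lipschitz for $d_\Omega$ (phrased there as a distance estimate between the circles $|\zeta|=r$ and $|\zeta|=r^\nu$), arriving at the same final chain of inequalities with $C_1$ depending on $L$. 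One slip: your condition defining $\delta_1$, namely $u(\zeta_1)+CL<\log\log\frac{1}{r_0}$, is written in the wrong direction and is in fact unsatisfiable, since $u(\zeta_1)>\log\log\frac{1}{r_0}$ already holds for $|\zeta_1|<r_0$; the condition you need for the bootstrap is $u(\zeta_1)-CL>\log\log\frac{1}{r_0}$, i.e.\ $\delta_1\le r_0^{e^{CL}}$ (the paper's $\delta_1=\delta_0^{\nu}$). With that inequality reversed, the rest of your argument is correct.
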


\begin{proof} Let $\rho_\Omega(\zeta)|d\zeta|$ be the hyperbolic metric of $\Omega$.  
It is well-known (see \cite{AhlforsCI} \S 1-8) that there exist $0<\delta_0<1$ and $C_0>0$ such that 
\begin{equation*} 
\rho_\Omega(\zeta) \ge \frac{C_0}{|\zeta| \log \frac {1}{|\zeta|}} \ \text{ for } 0 < |\zeta| \le \delta_0. 
\end{equation*}
Let $\nu = e^{L/C_0} \, (>1)$.  
Then for $0 < r \le \delta_0$, the distance between $\{\zeta: |\zeta|=r\}$ and $\{\zeta': |\zeta'|=r^\nu\}$ 
is bounded below by 
\begin{equation*} 
d_\Omega(\zeta, \zeta') \ge C_0 \int_{r^\nu}^r \frac{ds}{s\log\frac{1}{s}}
=C_0 \left(-\log\log\frac{1}{r} + \log\log\frac{1}{r^\nu}\right)=C_0 \log \nu=L.  
\end{equation*}
Let $\delta_1 = \delta_0^{\nu}$ and $C_1= \frac{\nu}{C_0}$.  
Suppose $0<|\zeta_1| \le \delta_1$ and $d_\Omega(\zeta_1, \zeta_2) \le L$, 
and let $\gamma$ be the shortest hyperbolic geodesic in $\Omega$ 
joining $\zeta_1$ and $\zeta_2$.  Then, by the above estimate for the circles of radii $|\zeta_1|^{\nu}$, $|\zeta_1|$, $|\zeta_1|^{1/\nu}$,  
we have for $\zeta \in \gamma$, 
$|\zeta_1|^\nu \le |\zeta| \le |\zeta_1|^{1/\nu} \le \delta_0$.  Hence 
\begin{align*}
\left| \log \zeta_1 - \log \zeta_2 \right|_{\calC} 
&\le \left| \int_{\gamma} \frac {d\zeta}{\zeta} \right| \le \int_{\gamma} \frac {|d\zeta|}{|\zeta|}
\le \frac{1}{C_0}\int_{\gamma} \rho_\Omega(\zeta) \log \frac{1}{|\zeta|} |d\zeta| \\
&\le \frac{\nu}{C_0} \log \frac{1}{|\zeta_1|} \int_{\gamma} \rho_\Omega(\zeta) |d\zeta|
= C_1 d_\Omega(\zeta_1,\zeta_2) \cdot \log \frac{1}{|\zeta_1|}.
\end{align*}
\vskip -9mm
\end{proof}

Thus, in order to to show the conformality,  we want to show that 
 \  $d_\Omega(\zeta_1, \zeta_2) \cdot \log \frac {1}{|\zeta_1|}$ \  is small 
when $z_1$, $z_2$ are small.  

\section{Gr\"otzsch-type inequality for cross-ratio variation}

We need the following Gr\"otzsch-type inequality for cross-ratio variation.  

\begin{thm} \label{thm:FundIneq} Let $f: \CC \to \CC$ be a quasiconformal mapping and 
$z_1, z_2, z_3, z_4$ distinct points in $\CC$, and put $z_j'=f(z_j)$ ($j=1,2,3,4$).  
Then 
\begin{equation} \label{eq:FundIneq} 
d_{\Omega}(\Cr(z_1, z_2, z_3, z_4), \Cr(z_1',z_2',z_3',z_4')) 
\le \log \overline K_f(z_1, z_2, z_3, z_4), 
\end{equation}
where 
\begin{equation} \label{eq:Kfourpoints}
 \overline K_f(z_1, z_2, z_3, z_4) 
:= 
\frac {\, \displaystyle \sup_{\theta \in \R} 
\iint_\C \frac {\, \left|1+ e^{i\theta} \mu(z) \frac{\vphi(z)}{|\vphi(z)|}\right|^2}{1-|\mu(z)|^2} |\vphi(z)| dx\tinyhsp dy \, }
{\displaystyle \iint_\C |\vphi(z)| dx\tinyhsp dy}
\end{equation}
with $\mu(z)=\mu_f(z)$ and $\vphi(z)=\frac{1}{(z-z_1)(z-z_2)(z-z_3)(z-z_4)}$ (omit $(z-z_j)$ if $z_j=\infty$).  
\end{thm}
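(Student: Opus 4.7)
My plan is to reduce, by M\"obius invariance, to the case of a QC self-map of $\CC$ fixing three of the points at $0, 1, \infty$, and then apply a Reich-Strebel-type length-area inequality on the $4$-punctured sphere. Choose M\"obius transformations $M, M' : \CC \to \CC$ sending $(z_1, z_2, z_3)$ and $(z_1', z_2', z_3')$ respectively to $(0, 1, \infty)$, and set $g := M' \circ f \circ M^{-1}$. Then $g$ fixes $0, 1, \infty$ and $g(\zeta) = \zeta'$ with $\zeta := \Cr(z_1, z_2, z_3, z_4)$ and $\zeta' := \Cr(z_1', z_2', z_3', z_4')$ (M\"obius invariance of cross-ratio), so the LHS of \eqref{eq:FundIneq} equals $d_\Omega(\zeta, \zeta')$.

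Next I would verify that the RHS is invariant under this normalization. The space of integrable holomorphic quadratic differentials on $\CC$ with simple poles at four prescribed points is one-dimensional, so the pullback $M^*(\psi_0(w)\,dw^2)$ with $\psi_0(w) := 1/[w(w-1)(w-\zeta)]$ is a non-zero constant multiple of $\vphi(z)\,dz^2$. The Beltrami chain rule gives $\mu_g \circ M = \mu_f \cdot M'/\overline{M'}$; combining this with the phase of $\psi_0 \circ M$, a brief change-of-variables computation shows that the integrand $\frac{|1+e^{i\theta}\mu_f \vphi/|\vphi||^2}{1-|\mu_f|^2}|\vphi|\,dx\,dy$ pulls back (modulo a unimodular factor absorbed by $\sup_\theta$) to the analogous integrand for $g$ and $\psi_0$, while the scale constant cancels in the ratio \eqref{eq:Kfourpoints}. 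Hence $\overline K_f(z_1, z_2, z_3, z_4) = \overline K_g(0, 1, \infty, \zeta)$, and it suffices to prove the inequality for $g$.

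For the reduced inequality $d_\Omega(\zeta, \zeta') \le \log \overline K_g$, the key observation is that the one-dimensional space of integrable holomorphic quadratic differentials on the $4$-punctured sphere is spanned by $\psi_0$, which is also the infinitesimal generator of the Teichm\"uller deformation of $T(0,4)$. Ahlfors's variational formula, combined with a standard length-area (Cauchy-Schwarz) estimate on the horizontal trajectories of $\psi_0\,dz^2$, controls the hyperbolic speed of $\zeta_t := g_t(\zeta)$ in $\Omega$ for an interpolating family $g_t$ with Beltrami coefficient $t\mu_g$; integrating in $t$ gives the desired bound, with the $\sup_\theta$ arising because $g$ is not assumed Teichm\"uller extremal and one must take the worst choice of gauge of $\psi_0$.

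The main obstacle is matching the sharp constant and the exact form \eqref{eq:Kfourpoints}: to get precisely $\log \overline K_g$ (not $\tfrac12 \log$ or a looser multiple) one needs the Teichm\"uller metric on $T(0,4)$ to coincide with the hyperbolic metric $|dz|/\Im z$ on its universal cover $\HH$ above $\Omega$. Alternatively one can bypass Teichm\"uller language and prove the bound directly via Gr\"otzsch's length-area argument on the hyperelliptic double cover of the $4$-punctured sphere, where $\psi_0\,dz^2$ lifts to a translation-invariant holomorphic quadratic differential on a torus and the required inequality reduces to classical modulus estimates for foliated annuli.
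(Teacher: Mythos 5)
Your primary route has a genuine gap. Ahlfors's variational formula gives only the first-order motion of the cross-ratio, $\dot\zeta_t=-\tfrac1\pi\iint\dot\mu_t\,\psi_t$, where $\psi_t$ is the integrable quadratic differential attached to the \emph{moved} configuration $(0,1,\infty,\zeta_t)$ and $\dot\mu_t$ is the infinitesimal Beltrami coefficient of the transition maps $g_{t+\eps}\circ g_t^{-1}$. To pass from this to the global bound $d_\Omega(\zeta,\zeta')\le\log\overline K_g$ you would have to control the integrand at every intermediate time and show that these contributions reassemble into the single integral \eqref{eq:Kfourpoints} evaluated at the original $\mu$ and $\vphi$; the sentence ``integrating in $t$ gives the desired bound'' is precisely the content of the theorem and is not justified by what precedes it. Moreover your explanation of the $\sup_\theta$ (``worst choice of gauge of $\psi_0$'' for a non-extremal map) is not the correct mechanism: in the actual proof the phase $e^{i\theta}$ indexes the \emph{direction} of the foliation, i.e.\ the choice of generators of the lattice on the covering torus, and the supremum is forced because a single direction only yields a one-sided modulus inequality.

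Your fallback sentence --- the hyperelliptic double cover on which $\vphi\,dz^2$ lifts to a constant quadratic differential on a torus $\E_\tau$, followed by Gr\"otzsch's length--area argument --- is exactly the paper's proof, but as stated it omits the two steps that make the argument close. First, the length--area estimate along horizontal curves gives only $\Im\tau\le K_{\tilde f,0}\,\Im\tau'$, which excludes $\tau$ from a single horodisk tangent to $\partial\HH$ at $\infty$; one must repeat the argument for every basis $(c\tau+d,\,a\tau+b)$ with $\left(\begin{smallmatrix}a&b\\ c&d\end{smallmatrix}\right)\in SL(2,\Z)$ --- this is where $e^{i\theta}$ enters --- and then use the density of the tangent points $-d/c$ in $\partial\HH$ to conclude $d_{\HH}(\tau,\tau')\le\log\sup_\theta K_{\tilde f,\theta}$. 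Second, one must descend from $\HH$ to $\Omega$ via the modular function $\lambda$, which is a holomorphic covering and hence does not increase hyperbolic distance, giving $d_\Omega(\lambda(\tau),\lambda(\tau'))\le d_\HH(\tau,\tau')$. Your M\"obius normalization to $(0,1,\infty)$ and the invariance check for $\overline K_f$ are correct but unnecessary: the double cover can be taken branched over the four given points directly.
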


This is a special case of Fundamental Inequality in the Teichm\"uller theory 
(\cite{GardinerLakic} Chap. 4, Theorem 9) applied to four punctured sphere.  
In fact, this case can be proven directly as in \cite{AhlforsQC}. 
For the completeness, we will outline this proof in Appendix A.  \par
Note that the above inequality implies the classical Gr\"otzsch inequality \\
$d_{\Omega}(\Cr(z_1, z_2, z_3, z_4), \Cr(z_1',z_2',z_3',z_4')) \le \log K(f)$, 
since $\overline K_f(z_1, z_2, z_3, z_4) \le K(f)$.  
We now express $\overline K_f$ in terms of the following integrals.  

\begin{defn*} Let $z_1, z_2 \in \C \sminus \{0\}$ with $z_1 \neq z_2$. Note that $|\vphi_{z_1,z_2 \vphantom{|}}|$ is integrable over $\C$. 
Let 
\begin{equation} \label{eq:DefnJstar} 
J_*(z_1, z_2) = 
\iint_\C |\vphi_{z_1,z_2 \vphantom{|}}(z)| dx \tinyhsp dy.  
\end{equation}
For a measurable function $\mu: \C \to \C$ with $||\mu(z)||_\infty<1$, 
define 
\begin{equation} \label{eq:DefnJ} 
J(\mu; z_1, z_2) = 
2 \left| \iint_\C \frac{\mu(z)\vphi_{z_1,z_2 \vphantom{|}}(z)}{1-|\mu(z)|^2}dx \tinyhsp dy\right|
+ 2 \iint_\C \frac{|\mu(z)|^2 |\vphi_{z_1,z_2 \vphantom{|}}(z)|}{1-|\mu(z)|^2}dx \tinyhsp dy.  
\end{equation}
\end{defn*}

\begin{lem} \label{lem:JandKbar} 
Suppose $0<|z_2|< |z_1|$. 
Then for $f$ in Theorem  \ref{thm:FundIneq}, we have 
\begin{gather*}
J_*(z_1,z_2) \ge 2 \pi \frac{1}{\,|1- \frac{z_2}{z_1}|\,}  \log \frac {\,|z_1|\,}{|z_2|}; \\
\overline K_f(z_1,z_2,0,\infty) = 1+ \frac{J(\mu_f; z_1, z_2)}{J_*(z_1, z_2)}; \\
J(\mu_f; z_1, z_2) 
\le \iint_\C (K_f(z) -1) |\vphi_{z_1,z_2 \vphantom{|}}(z)| dx \tinyhsp dy \le (K(f)-1)J_*(z_1.z_2).
\end{gather*}
\end{lem}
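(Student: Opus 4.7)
The plan is to prove the three parts in turn. None requires a deep new idea beyond Theorem~\ref{thm:FundIneq}: each is essentially a direct calculation, but the first needs a small geometric trick and the second needs careful bookkeeping of phases.

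\emph{First inequality.} The substitution $w=z/z_1$, $\tau=z_2/z_1$ shows that $J_*(z_1,z_2)$ depends only on $\tau$:
\begin{equation*}
J_*(z_1,z_2)=\iint_\C \frac{du\tinyhsp dv}{|w|\,|w-1|\,|w-\tau|}.
\end{equation*}
The crux is the elementary triangle inequality $|w-a|\le |w|+|a|$, applied to $a=1$ and $a=\tau$, which gives the pointwise lower bound $1/(|w|\,|w-1|\,|w-\tau|)\ge 1/(|w|(|w|+1)(|w|+|\tau|))$. The right-hand side is radial, and a partial-fractions computation yields
\begin{equation*}
2\pi \int_0^\infty \frac{dr}{(r+1)(r+|\tau|)} = \frac{2\pi}{1-|\tau|}\log\frac{1}{|\tau|}.
\end{equation*}
Since $|1-\tau|\ge 1-|\tau|$, this implies the asserted bound.

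\emph{Second identity.} In \eqref{eq:Kfourpoints} with $z_3=0$, $z_4=\infty$, one has $\vphi=\vphi_{z_1,z_2 \vphantom{|}}/z_1$, so $|\vphi|=|\vphi_{z_1,z_2 \vphantom{|}}|/|z_1|$ and $\vphi/|\vphi|=e^{-i\arg z_1}\cdot \vphi_{z_1,z_2 \vphantom{|}}/|\vphi_{z_1,z_2 \vphantom{|}}|$. Since $e^{i\theta}$ ranges over all of the unit circle, the phase $e^{-i\arg z_1}$ may be absorbed into $\theta$, and the factor $1/|z_1|$ cancels in the quotient defining $\overline K_f$. Expanding
\begin{equation*}
\Bigl|1+e^{i\theta}\mu\frac{\vphi_{z_1,z_2 \vphantom{|}}}{|\vphi_{z_1,z_2 \vphantom{|}}|}\Bigr|^2 = 1+|\mu|^2 + 2\Re\Bigl(e^{i\theta}\mu\frac{\vphi_{z_1,z_2 \vphantom{|}}}{|\vphi_{z_1,z_2 \vphantom{|}}|}\Bigr),
\end{equation*}
multiplying by $|\vphi_{z_1,z_2 \vphantom{|}}|/(1-|\mu|^2)$, integrating, and using $(1+|\mu|^2)/(1-|\mu|^2)=1+2|\mu|^2/(1-|\mu|^2)$ together with $\sup_\theta 2\Re(e^{i\theta}I)=2|I|$ produces a numerator equal to $J_*(z_1,z_2)+J(\mu_f;z_1,z_2)$. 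Dividing by $J_*(z_1,z_2)$ yields the identity.

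\emph{Third chain of inequalities.} Apply the triangle inequality to the complex integral inside $J$:
\begin{equation*}
J(\mu_f;z_1,z_2) \;\le\; 2\iint_\C \frac{|\mu|+|\mu|^2}{1-|\mu|^2}\,|\vphi_{z_1,z_2 \vphantom{|}}|\,dx\tinyhsp dy \;=\; \iint_\C \frac{2|\mu|}{1-|\mu|}\,|\vphi_{z_1,z_2 \vphantom{|}}|\,dx\tinyhsp dy,
\end{equation*}
and the identity $2|\mu|/(1-|\mu|)=(1+|\mu|)/(1-|\mu|)-1=K_f-1$ finishes the first inequality. The second follows at once from $K_f(z)\le K(f)$ a.e.

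The main obstacle is the first step: $J_*(\tau)$ has no convenient closed form, and one needs a lower bound that captures the correct qualitative growth $\sim\log(1/|\tau|)$ as $|\tau|\to 0$. Cruder tools, such as applying $|\iint F|\le \iint |F|$ to the partial-fraction identity $1/((w-1)(w-\tau))=\tfrac{1}{1-\tau}(1/(w-1)-1/(w-\tau))$, miss the logarithm entirely; the triangle-inequality reduction to a radial integrand is what makes the estimate both transparent and sharp in the right sense.
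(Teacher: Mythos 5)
Your proof is correct. Parts two and three are essentially the paper's own argument: the same expansion of $\left|1+e^{i\theta}\mu\tfrac{\vphi}{|\vphi|}\right|^2$ (after correctly noting that the normalization $\vphi=\vphi_{z_1,z_2}/z_1$ and the phase $e^{-i\arg z_1}$ wash out in the quotient and the $\sup_\theta$), and the same identity $2\tfrac{|\mu|+|\mu|^2}{1-|\mu|^2}=K_f-1$. Where you genuinely diverge is the first inequality. The paper applies the Residue Theorem on each circle $|z|=r$ with $|z_2|<r<|z_1|$ to get $2\pi\left|\tfrac{z_1}{z_1-z_2}\right|\le r^2\int_0^{2\pi}|\vphi(re^{i\theta})|\,d\theta$, then integrates $dr/r$ over the annulus --- a length--area (Gr\"otzsch-style) argument that localizes the mass of $|\vphi_{z_1,z_2}|$ in the annulus $|z_2|<|z|<|z_1|$, consonant with the paper's heuristic that $\vphi_{z_1,z_2}\,dz^2$ behaves like $c\,dz^2/z^2$ there. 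You instead normalize to $\tau=z_2/z_1$, bound the integrand pointwise from below by the radial function $1/(|w|(|w|+1)(|w|+|\tau|))$ via the triangle inequality, and compute the resulting integral over all of $\C$ exactly by partial fractions, obtaining $\tfrac{2\pi}{1-|\tau|}\log\tfrac{1}{|\tau|}$, which is at least the stated bound since $|1-\tau|\ge 1-|\tau|$. Your route is more elementary (no contour integration) and in fact yields a marginally sharper constant; the paper's route is arguably more informative about where the integral concentrates, which is the geometric picture driving the rest of the argument, but for the purposes of this lemma the two are interchangeable.
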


\begin{proof}
Denote $\vphi=\vphi_{z_1,z_2}$. 
By the Residue Theorem, we have for $|z_2|<r<|z_1|$, 
\begin{equation*}
\int_{|z|=r} z \tinyhsp \vphi(z) dz = \int_{|z|=r} \frac {z_1}{(z-z_1)(z-z_2)} dz 
= 2\pi i \Res_{z=z_2} \frac {z_1}{(z-z_1)(z-z_2)} = 2 \pi i \frac {z_1}{z_2-z_1}. 
\end{equation*} 
Hence 
$2\pi \left| \frac{z_1}{z_1-z_2} \right| 
= \left| \int_{0}^{2\pi} r e^{i\theta} \vphi(r e^{i\theta}) i r e^{i\theta} d\theta \right| 
\le r^2 \int_{0}^{2\pi} |\vphi(r e^{i\theta})| d\theta$ and 
\begin{align*} 
J_*(z_1,z_2) & \ge \int _{\{|z_2|<|z|<|z_1|\}} |\vphi(x+iy)| dx\tinyhsp dy 
= \int_{|z_2|}^{|z_1|} \int_{0}^{2\pi} |\vphi(r e^{i\theta})| r d\theta \tinyhsp dr \\
&\ge 2\pi \left| \frac{z_1}{z_1-z_2} \right| \int_{|z_2|}^{|z_1|} \frac{dr}{r} 
= 2\pi \frac{1}{|1- \frac{z_2}{z_1}|} \log \frac {\,|z_1|\,}{|z_2|}.    
\end{align*}
The equality for $\overline{K}_f$ is obvious from 
$\left|1+ e^{i\theta} \mu_f(z) \frac{\vphi(z)}{|\vphi(z)|}\right|^2
=(1-|\mu_f(z)|^2) + 2\Re \left(e^{i\theta} \mu_f(z) \frac{\vphi(z)}{|\vphi(z)|}\right) + 2|\mu_f(z)|^2$.  
The last inequality follows from 
$2\frac{|\mu_f(z)|+|\mu_f(z)|^2}{1-|\mu_f(z)|^2} 
= \frac{1+|\mu_f(z)|}{1-|\mu_f(z)|}-1 \le K_f(z)-1$.  
\end{proof}

\section{Main Theorem and Proof of Theorems \ref{thm:GutMartio} and \ref{thm:HoelderConformal}}

Our criterion for the pointwise conformality is as follows: 
\begin{thm} \label{thm:MainThm} 
Let $f: \C \to \C$ be a $K$-quasiconformal mapping with $f(0)=0$ and suppose that there exists 
$0<\delta <1$  such that 
\begin{equation}  \label{JtendsTo0} 
J(\mu_f; z_1,z_2) \to 0 \ \text{ when $z_1$ and $z_2$ tend to $0$ satisfying } 0<|z_2| \le \delta |z_1|.
\end{equation}
Then $f$ is conformal at $z=0$.  Moreover there exists a constant $C>0$ depending only on $K$ such that 
\begin{equation} \label{eq:ConformalityEstimate} 
\left| \log \frac{f(z)}{z} - \log f'(0) \right|_{\calC} 
\le C \liminf_{z_2 \to 0} J(\mu_f; z,z_2). 
\end{equation}
\end{thm}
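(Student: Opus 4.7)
The plan is to reduce the statement to the Cauchy criterion supplied by Lemma \ref{lem:CharacterizeConformality}(c) and then feed in the three ingredients assembled in the previous section: the Fundamental Inequality of Theorem \ref{thm:FundIneq}, the identity $\overline K_f = 1 + J/J_*$ together with the lower bound on $J_*$ from Lemma \ref{lem:JandKbar}, and the passage from hyperbolic to cylinder distance from Lemma \ref{lem:LogVsPoincareDist}. First, I would fix $z_1,z_2$ with $0<|z_2|\le\delta|z_1|$ and set, as in the discussion preceding Lemma \ref{lem:LogVsPoincareDist},
\[
\zeta_1=\frac{z_2}{z_1}=\Cr(z_1,z_2,\infty,0),\qquad \zeta_2=\frac{f(z_2)}{f(z_1)}=\Cr(f(z_1),f(z_2),\infty,0),
\]
so that the Cauchy quantity $\bigl|\log\tfrac{f(z_1)}{z_1}-\log\tfrac{f(z_2)}{z_2}\bigr|_\calC$ equals $|\log\zeta_1-\log\zeta_2|_\calC$.

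The classical Gr\"otzsch inequality, a consequence of Theorem \ref{thm:FundIneq}, supplies the a priori bound $d_\Omega(\zeta_1,\zeta_2)\le\log K$, so Lemma \ref{lem:LogVsPoincareDist} applies with $L=\log K$ and produces constants $C_1,\delta_1$ depending only on $K$. Shrinking the hypothesized $\delta$ to $\delta':=\min(\delta,\delta_1)$ only strengthens \eqref{JtendsTo0} and guarantees $|\zeta_1|\le\delta_1$ automatically. Combining Theorem \ref{thm:FundIneq}, Lemma \ref{lem:JandKbar}, and the trivial bound $\log(1+x)\le x$ gives
\[
d_\Omega(\zeta_1,\zeta_2)\le\log\overline K_f(z_1,z_2,0,\infty)\le\frac{J(\mu_f;z_1,z_2)}{J_*(z_1,z_2)},
\]
while the same Lemma \ref{lem:JandKbar} supplies $J_*(z_1,z_2)\ge\frac{2\pi}{1+\delta'}\log\frac{|z_1|}{|z_2|}$. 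Multiplying the $d_\Omega$-bound by $\log(1/|\zeta_1|)=\log(|z_1|/|z_2|)$ and feeding the product into Lemma \ref{lem:LogVsPoincareDist}, the logarithmic factors cancel and leave
\[
\left|\log\frac{f(z_1)}{z_1}-\log\frac{f(z_2)}{z_2}\right|_\calC\le C\,J(\mu_f;z_1,z_2),
\]
with $C=\frac{C_1(1+\delta')}{2\pi}$ depending only on $K$.

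Conformality at $0$ then follows at once from Lemma \ref{lem:CharacterizeConformality}(c), with $\delta_1$ there replaced by $\delta'$ and the $\eps$ there controlled via hypothesis \eqref{JtendsTo0}. For the quantitative bound \eqref{eq:ConformalityEstimate} I would fix $z$ small, pick a sequence $z_2\to 0$ realising $\liminf_{z_2\to 0}J(\mu_f;z,z_2)$, and pass to the limit in the boxed estimate; since $\log\frac{f(z_2)}{z_2}\to\log f'(0)$, the triangle inequality in $\calC$ transfers the bound to $\bigl|\log\tfrac{f(z)}{z}-\log f'(0)\bigr|_\calC$.

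I do not expect a serious obstacle here, beyond careful bookkeeping: the theorem is essentially a matter of matching up the three ingredients. The one delicate point is checking that the final constant $C$ really depends only on $K$, which works because the Gr\"otzsch bound gives a universal $L=\log K$ in Lemma \ref{lem:LogVsPoincareDist} and because $|1-z_2/z_1|\le 1+\delta'\le 2$ keeps the prefactor in the $J_*$-bound under control. The conceptual heart of the argument, and what justifies calling the approach ``new and simple'', is the fortunate cancellation of $\log(|z_1|/|z_2|)$ between the lower bound on $J_*$ and the upper bound in Lemma \ref{lem:LogVsPoincareDist}, which is itself a consequence of having placed two of the four cross-ratio vertices at $0$ and $\infty$.
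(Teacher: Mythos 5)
Your proposal is correct and follows essentially the same route as the paper: it isolates the Key Inequality $\bigl|\log\tfrac{f(z_1)}{z_1}-\log\tfrac{f(z_2)}{z_2}\bigr|_\calC\le C\,J(\mu_f;z_1,z_2)$ by chaining Theorem \ref{thm:FundIneq}, Lemma \ref{lem:JandKbar} with $\log(1+x)\le x$, and Lemma \ref{lem:LogVsPoincareDist} (with $L=\log K$ via the classical Gr\"otzsch bound), exploiting exactly the cancellation of $\log(|z_1|/|z_2|)$ that the paper relies on, and then concludes via Lemma \ref{lem:CharacterizeConformality}(c) and a limiting argument for \eqref{eq:ConformalityEstimate}. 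No gaps.
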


This follows from the following lemma.  
\begin{lem}[Key Inequality] \label{lem:KeyInequality} 
Given $K >1$, there exist $\delta_1<1$ and $C$ such that  
if $f:\C \to \C$ is a $K$-quasiconformal mapping with $f(0)=0$, then for $0<|z_2| \le \delta_1|z_1|$, 
\begin{equation}
\begin{aligned} \label{eq:KeyIneq}
\left| \log \frac{f(z_1)}{z_1} - \log \frac{f(z_2)}{z_2} \right|_{\calC} 
&\le 
C J(\mu_f; z_1,z_2).
\end{aligned}
\end{equation}
\end{lem}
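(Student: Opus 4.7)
The plan is to combine the three tools assembled in the previous section: Theorem \ref{thm:FundIneq} (fundamental inequality for cross-ratio), Lemma \ref{lem:JandKbar} (translation between $\overline K_f$ and $J$, $J_*$), and Lemma \ref{lem:LogVsPoincareDist} (comparison between the cylinder distance of logarithms and the hyperbolic distance on $\Omega$).

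First I would recast the left-hand side in terms of cross-ratios. With $\zeta_1=z_2/z_1=\Cr(z_1,z_2,\infty,0)$ and $\zeta_2=f(z_2)/f(z_1)=\Cr(f(z_1),f(z_2),\infty,0)$, a direct computation gives
\[
\log\frac{f(z_1)}{z_1}-\log\frac{f(z_2)}{z_2}=\log\zeta_1-\log\zeta_2\pmod{2\pi i},
\]
so the quantity to estimate equals $|\log\zeta_1-\log\zeta_2|_\calC$. The classical Gr\"otzsch inequality (i.e. Theorem \ref{thm:FundIneq} with the trivial bound $\overline K_f\le K(f)\le K$) yields the uniform bound $d_\Omega(\zeta_1,\zeta_2)\le\log K=:L$. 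I then fix $\delta_1$ and $C_1$ as the constants produced by Lemma \ref{lem:LogVsPoincareDist} for this $L$; the hypothesis $0<|z_2|\le\delta_1|z_1|$ translates precisely into $|\zeta_1|\le\delta_1$, which is exactly what Lemma \ref{lem:LogVsPoincareDist} requires in order to give
\[
\bigl|\log\zeta_1-\log\zeta_2\bigr|_\calC\le C_1\, d_\Omega(\zeta_1,\zeta_2)\,\log\frac{1}{|\zeta_1|}.
\]

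Next I would refine the bound on $d_\Omega(\zeta_1,\zeta_2)$ using Theorem \ref{thm:FundIneq} and Lemma \ref{lem:JandKbar} applied to the four points $z_1,z_2,0,\infty$:
\[
d_\Omega(\zeta_1,\zeta_2)\le \log\overline K_f(z_1,z_2,0,\infty)=\log\!\Bigl(1+\tfrac{J(\mu_f;z_1,z_2)}{J_*(z_1,z_2)}\Bigr)\le\frac{J(\mu_f;z_1,z_2)}{J_*(z_1,z_2)}.
\]
Multiplying by $\log(1/|\zeta_1|)$ and invoking the lower bound $J_*(z_1,z_2)\ge\frac{2\pi}{|1-\zeta_1|}\log(1/|\zeta_1|)$ from Lemma \ref{lem:JandKbar}, the $\log(1/|\zeta_1|)$ factor cancels and I obtain
\[
d_\Omega(\zeta_1,\zeta_2)\,\log\frac{1}{|\zeta_1|}\le\frac{|1-\zeta_1|}{2\pi}\,J(\mu_f;z_1,z_2)\le\frac{1+\delta_1}{2\pi}\,J(\mu_f;z_1,z_2).
\]
Combining with the previous display gives the claimed inequality with $C=\dfrac{C_1(1+\delta_1)}{2\pi}$, a constant that depends only on $K$ through $L=\log K$.

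There is no serious obstacle: every piece has been prepared. The only point requiring care is the matching of the two roles played by $\delta_1$ — the one in the statement of Lemma \ref{lem:KeyInequality} (measuring $|z_2|/|z_1|$) and the one in Lemma \ref{lem:LogVsPoincareDist} (measuring $|\zeta_1|$) — which I resolve by simply choosing the former equal to the latter, with the Gr\"otzsch bound $L=\log K$ feeding Lemma \ref{lem:LogVsPoincareDist}. A minor bookkeeping point is that branches of $\log$ are irrelevant because all estimates are performed modulo $2\pi i$ via the cylinder distance $|\cdot|_\calC$.
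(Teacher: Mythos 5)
Your proposal is correct and follows essentially the same route as the paper's proof: the same chain (classical Gr\"otzsch bound to feed Lemma \ref{lem:LogVsPoincareDist} with $L=\log K$, then Theorem \ref{thm:FundIneq} and Lemma \ref{lem:JandKbar} with $\log(1+x)\le x$ to refine $d_\Omega(\zeta_1,\zeta_2)$, then cancellation of the $\log(1/|\zeta_1|)$ factor against the lower bound on $J_*$), arriving at the identical constant $C=\frac{C_1(1+\delta_1)}{2\pi}$.
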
 

\begin{proof} Let $\delta_1<1$ and $C_1$ be as in Lemma \ref{lem:LogVsPoincareDist} for $L=\log K$.  
Then take $C=\frac{C_1 (1+\delta_1)}{2\pi}$.  
For two distinct points $z_1, z_2 \in \C \sminus \{0\}$ and consider the cross-ratios 
$\zeta_1=\frac{z_2}{z_1} = \Cr(z_1,z_2, \infty, 0)$ and $\zeta_2=\frac{f(z_2)}{f(z_1)} = \Cr(f(z_1),f(z_2), \infty, 0)$.  
By Theorem \ref{thm:FundIneq} and Lemma \ref{lem:JandKbar} and $\log(1+x) \le x \ (x \ge0)$, we have 
\begin{equation} \label{eq:PoincareDistEstimate}
d_\Omega(\zeta_1, \zeta_2) \le \log \overline K_f(z_1,z_2,\infty, 0) 
\le 
 \frac {J(\mu_f; z_1,z_2)}{J_*(z_1,z_2)} 
\le \frac{J(\mu_f; z_1,z_2)}{2 \pi \frac{1}{\,|1- \frac{z_2}{z_1}|\,}  \log \frac {\,|z_1|\,}{|z_2|}}.
\end{equation}
By the classical Gr\"otzsch inequality, we have $d_\Omega(\zeta_1, \zeta_2) \le \log K=L$.  
Hence by Lemma \ref{lem:LogVsPoincareDist}, 
if $0<|z_2| \le \delta_1|z_1|$, then we have: 
\begin{equation}
\begin{aligned} \label{eq:KeyIneq}
\left| \log \frac{f(z_1)}{z_1} - \log \frac{f(z_2)}{z_2} \right|_{\calC} 
= \left| \log \frac {\zeta_1}{\zeta_2} \right|_{\calC} 
&\le C_1 d_\Omega(\zeta_1, \zeta_2) \log \frac{1}{|\zeta_1|} 
\le
\frac{C_1 \left|1-\frac{z_2}{z_1} \right|}{2 \pi} J(\mu_f; z_1,z_2) \\
&\le \frac{C_1(1+\delta_1)}{2 \pi} 
J(\mu_f; z_1,z_2) = C J(\mu_f; z_1,z_2). 
\end{aligned}
\end{equation} \vskip -5mm
\end{proof}

\begin{proof}[Proof of Theorem \ref{thm:MainThm}] 
This is an immediate consequence of Lemma \ref{lem:KeyInequality}.  
By the assumption \eqref{JtendsTo0}, in which we may replace $\delta$ by a smaller one so that $\delta \le \delta_1$, 
(c) of Lemma \ref{lem:CharacterizeConformality} holds, 
hence $f$ is conformal at $z=0$.  
Moreover taking the limit $z_2 \to 0$ in \eqref{eq:KeyIneq}, we obtain \eqref{eq:ConformalityEstimate}.  
\end{proof}

In order to deduce Theorem \ref{thm:GutMartio} from Theorem \ref{thm:MainThm}, we need to 
relate $J(\mu_f; z_1,z_2)$ to \eqref{eq:muSquareIntegrability} and \eqref{eq:muIntegrability}.  
For this purpose, we define the following quantity.  
\begin{defn}
Let $p>2$ and $p>s>0$.  For $\mu \in L^\infty(\C)$ with $||\mu||_\infty<1$, define 
\begin{equation} \label{eq:DefnIps} 
I_{p,s}(\mu; r) =  \iint_{\C} \frac{|\mu(z)|^p}{(1-|\mu(z)|^2)^p} \frac{dx \tinyhsp dy}{|z|^2 \left(1+ \frac{|z|}{r}\right)^s}
\end{equation}
For $0<r<R$, denote $A(r,R)=\{z \in \C: \, r<|z|<R\}$.  
\end{defn}

The following two lemmas will be proved in \S \ref{sec:EstimateOnJandIps}.

\begin{lem} \label{lem:EstimateOnJ} 
Let $\mu \in L^\infty(\C)$ with $||\mu||_\infty<1$.  Then for any $p>s>0$ with $p>2$ and $0<\rho<1$, 
there exists $C'=C'(p,s,\rho)>0$ such that if $0<|z_2|<\rho^2 |z_1|$, then 
\begin{align} \label{eq:EstimateOnJ1} 
\left| \iint_\C \frac{\mu(z)\vphi_{z_1,z_2 \vphantom{|}}(z)}{1-|\mu(z)|^2}dx \tinyhsp dy\right|
&\le \frac{1}{1-\rho^2}\left| \iint_{A(\rho^{-1} |z_2|, \rho |z_1|)} 
\frac{\mu(z)}{1-|\mu(z)|^2} \, \frac{dx\tinyhsp dy}{z^2} \right| + C' I_{p,s}(\mu; |z_1|)^{\frac{1}{p}}, \\ 
\label{eq:EstimateOnJ2} 
\iint_\C \frac{|\mu(z)|^2 |\vphi_{z_1,z_2 \vphantom{|}}(z)|}{1-|\mu(z)|^2}dx \tinyhsp dy 
&\le \frac{1}{1-\rho^2}\iint_{A(\rho^{-1} |z_2|, \rho |z_1|)} 
\frac{|\mu(z)|^2}{1-|\mu(z)|^2} \, \frac{dx\tinyhsp dy}{|z|^2} 
+ C' I_{p,s}(\mu; |z_1|)^{\frac{1}{p}}.
\end{align}
\end{lem}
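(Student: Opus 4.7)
The key is the algebraic identity
\[
\frac{1}{(1-u)(1-v)} = \frac{1}{1-uv}\left(1 + \frac{u}{1-u} + \frac{v}{1-v}\right),
\]
which follows from $1-uv = (1-u) + u(1-v)$. Applying it with $u=z/z_1$ and $v=z_2/z$, so that $uv=\eta:=z_2/z_1$, this converts the factorization $\vphi_{z_1,z_2}(z) = -\frac{1}{z^2(1-u)(1-v)}$ into the decomposition
\[
\vphi_{z_1,z_2}(z) = -\frac{1}{1-\eta}\left(\frac{1}{z^2} + \frac{1}{z(z_1-z)} + \frac{z_2}{z^2(z-z_2)}\right),
\]
valid for $z \neq 0, z_1, z_2$. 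The hypothesis $|z_2|<\rho^2|z_1|$ gives $|\eta|<\rho^2$ and hence $|1-\eta|^{-1}\le(1-\rho^2)^{-1}$, which is precisely where the main constant in the lemma comes from.

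The plan is to split $\C = D_0 \sqcup A \sqcup D_\infty$, with $D_0=\{|z|<\rho^{-1}|z_2|\}$, $A = A(\rho^{-1}|z_2|,\rho|z_1|)$, and $D_\infty=\{|z|>\rho|z_1|\}$, and estimate each piece separately. On $A$, where all three terms of the decomposition are individually integrable, I substitute and take absolute values: the first term produces the main contribution $\frac{1}{|1-\eta|}\left|\iint_A \mu/(z^2(1-|\mu|^2))\,dx\,dy\right|$, and the other two (``correction'' terms) are bounded via Hölder's inequality with conjugate exponents $p,\,q=p/(p-1)$ against $I_{p,s}(\mu;|z_1|)^{1/p}$. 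The basic bounds $|z_1-z|\ge(1-\rho)|z_1|$ and $|z-z_2|\ge(1-\rho)|z|$ on $A$, combined with a direct evaluation of the Hölder-conjugate integral $\iint_A |z|^{-\alpha}\,dx\,dy$ at the relevant exponent $\alpha$ (the factor $(1+|z|/|z_1|)^{sq/p}$ is bounded on $A$), show that the powers of $|z_1|$ and $|z_2|$ cancel and leave a constant depending only on $p,s,\rho$.

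On $D_0$ and $D_\infty$ I estimate $\iint |\mu\vphi_{z_1,z_2}|/(1-|\mu|^2)\,dx\,dy$ directly by Hölder against $I_{p,s}^{1/p}$. The hypothesis $p>2$ is essential here: the conjugate exponent $q<2$ is exactly what is needed for $|\vphi_{z_1,z_2}|^q$ to be integrable near the simple poles at $z=z_1$ (in $D_\infty$) and $z=z_2$ (in $D_0$); the hypothesis $s<p$ supplies the $(1+|z|/|z_1|)^{-s}$ decay needed for convergence at infinity. After rescaling $z\mapsto z/|z_2|$ on $D_0$ and $z\mapsto z/|z_1|$ on $D_\infty$, the Hölder-conjugate $L^q$ integrals reduce to dimensionless integrals bounded by constants independent of $z_1,z_2$, giving $\iint_{D_0\cup D_\infty}|\mu\vphi_{z_1,z_2}|/(1-|\mu|^2)\,dx\,dy \le C'\,I_{p,s}(\mu;|z_1|)^{1/p}$. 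Combining yields the first inequality; the second follows analogously, using the triangle-inequality form $|\vphi_{z_1,z_2}|\le\frac{1}{|1-\eta|}\left(\frac{1}{|z|^2}+\frac{1}{|z||z_1-z|}+\frac{|z_2|}{|z|^2|z-z_2|}\right)$ on $A$ and the same tail estimates. The main technical obstacle is the scaling bookkeeping in the Hölder calculations, verifying that the $|z_1|,|z_2|$ factors cancel in every instance; conceptually everything is driven by the decomposition identity above.
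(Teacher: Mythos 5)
Your proposal is correct and follows essentially the same route as the paper: the paper peels off $-\frac{z_1}{z_1+z_2}\cdot\frac{1}{z^2}$ from $\vphi_{z_1,z_2}$ (its remainder terms $\psi_1=\frac{1}{(z-z_1)(z-z_2)}$ and $\psi_2=\frac{z_1z_2}{z^2(z-z_1)(z-z_2)}$ play the same role as your two correction terms), splits $\C$ into the same three regions $\D(\rho^{-1}|z_2|)$, $A(\rho^{-1}|z_2|,\rho|z_1|)$, $\D^*(\rho|z_1|)$, and bounds every correction term by H\"older against $I_{p,s}(\mu;|z_1|)^{1/p}$ with the same rescalings and the same use of $q<2$ and $s<p$. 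The only cosmetic difference is that your coefficient of the $1/z^2$ term is $\frac{1}{1-z_2/z_1}$ rather than $\frac{z_1}{z_1+z_2}=\frac{1}{1+z_2/z_1}$; both have modulus at most $\frac{1}{1-\rho^2}$, so nothing changes.
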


\begin{lem} \label{lem:EstimateOnIps}
For $\mu \in L^\infty(\C)$ with $||\mu||_\infty<1$ satisfying \eqref{eq:muSquareIntegrability} and for $p>2$ and $p>s>0$, 
the integral $I_{p,s}(\mu; r)$ is finite.  
Moreover there exist constants $C_2$ and $C_3$ depending only on 
$K=\frac{1+||\mu||_\infty}{1-||\mu||_\infty}$ such that  for $0<r<r'$,  
\begin{equation} \label{eq:EstimateOnIps} 
I_{p,s}(\mu; r) \!= \!\! \iint_{\C} \!\frac{|\mu(z)|^p}{(1-|\mu(z)|^2)^p} \frac{dx \tinyhsp dy}{|z|^2 \left(1+ \frac{|z|}{r}\right)^{\!s}} 
\le C_2 \!\! \iint_{\{|z|<r'\} } \frac{|\mu(z)|^2}{1-|\mu(z)|^2} \frac{dx \tinyhsp dy}{|z|^2} 
+ \frac{C_3}{s} \!\! \left(\frac{r}{r'}\right)^{\!\!s} \!\!.
\end{equation}
Therefore $I_{p,s}(\mu; r) \to 0$ as $r \searrow 0$. 
\end{lem}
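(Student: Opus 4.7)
The plan is to split $I_{p,s}(\mu;r)$ as the sum of integrals over the inner disc $\{|z|<r'\}$ and the outer region $\{|z|\geq r'\}$, and estimate each one using a different pointwise bound on the integrand $|\mu|^p/(1-|\mu|^2)^p$. On the inner disc, I essentially convert the $p$-th power estimate to a second-power one at the cost of a $K$-dependent constant and throw away the weight. On the outer piece I replace the $\mu$-factor by its $L^\infty$-bound and exploit the weight $(1+|z|/r)^{-s}$ to produce the small factor $(r/r')^s$.

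For the inner region, since $\|\mu\|_\infty<1$ is equivalent to a bound on $K$, I would use
\[
\frac{|\mu(z)|^p}{(1-|\mu(z)|^2)^p}
= \left(\frac{|\mu(z)|}{1-|\mu(z)|^2}\right)^{\!p-2}\!\cdot\frac{1}{1-|\mu(z)|^2}\cdot\frac{|\mu(z)|^2}{1-|\mu(z)|^2}
\le C_2(K)\,\frac{|\mu(z)|^2}{1-|\mu(z)|^2}
\]
with $C_2(K)=\left(\frac{\|\mu\|_\infty}{1-\|\mu\|_\infty^2}\right)^{p-2}\frac{1}{1-\|\mu\|_\infty^2}$. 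Combined with the trivial bound $(1+|z|/r)^{-s}\le 1$, this yields exactly the first term on the right of \eqref{eq:EstimateOnIps}.

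For the outer region, I would use the cruder bound $|\mu|^p/(1-|\mu|^2)^p\le M$ with $M=\left(\|\mu\|_\infty/(1-\|\mu\|_\infty^2)\right)^p$, so that the estimate reduces to
\[
M \iint_{\{|z|\ge r'\}}\frac{dx\,dy}{|z|^2(1+|z|/r)^s}
= 2\pi M \int_{r'}^{\infty}\frac{dt}{t(1+t/r)^s}.
\]
Because $r<r'\le t$, we have $1+t/r\ge t/r$, hence $(1+t/r)^s\ge (t/r)^s$, and the integral is bounded by
\[
r^s\!\int_{r'}^{\infty}\frac{dt}{t^{1+s}}=\frac{1}{s}\left(\frac{r}{r'}\right)^{\!s},
\]
giving the $(C_3/s)(r/r')^s$ term with $C_3=2\pi M$.

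Finiteness of $I_{p,s}(\mu;r)$ follows by applying the estimate with, say, $r'=1$ and invoking \eqref{eq:muSquareIntegrability}. To obtain $I_{p,s}(\mu;r)\to 0$ as $r\searrow 0$, I would argue with a diagonal choice: given $\varepsilon>0$, absolute continuity of the finite measure $\frac{|\mu|^2}{1-|\mu|^2}\frac{dx\,dy}{|z|^2}$ allows me to pick $r'$ so small that the first term is below $\varepsilon/2$, and then for $r$ sufficiently small (with $r'$ fixed) the second term falls below $\varepsilon/2$ as well. I don't foresee any real obstacle here; the only delicate point is tracking which constants depend on $K$ alone and verifying that the bound $1+t/r\ge t/r$ is effective throughout the outer integration range, which is why the hypothesis $r<r'$ is precisely what is needed.
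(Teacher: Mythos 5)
Your proof is correct and follows essentially the same route as the paper: split at $|z|=r'$, bound the integrand on the inner disc by a $K$-dependent constant times $\frac{|\mu|^2}{1-|\mu|^2}\frac{1}{|z|^2}$ (your constant equals the paper's $\bigl(\tfrac{K+K^{-1}+2}{4}\bigr)\bigl(\tfrac{K-K^{-1}}{4}\bigr)^{p-2}$), and on the outer region by $\bigl(\tfrac{K-K^{-1}}{4}\bigr)^{p}\frac{1}{|z|^2(|z|/r)^s}$, then integrate. The concluding two-step limit argument ($r'$ first, then $r$) is also exactly the paper's.
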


Assuming these lemmas, we can give: 

\begin{proof}[Proof of Theorem \ref{thm:GutMartio}]
Since the convergence in  \eqref{eq:muIntegrability} and  \eqref{eq:muSquareIntegrability} imply 
that the first terms on the right hand sides of \eqref{eq:EstimateOnJ1} and  \eqref{eq:EstimateOnJ2} 
tend to $0$ as $z_1 \to 0$, 
Theorem \ref{thm:GutMartio} follows from Theorem \ref{thm:MainThm} and Lemmas \ref{lem:EstimateOnJ}, \ref{lem:EstimateOnIps}.  
\end{proof}

\begin{proof}[Proof of Theorem \ref{thm:HoelderConformal}]
Suppose $I(r) =O(r^\beta)$ ($r \searrow 0$) and $0<\alpha<\frac{\beta}{2+\beta}$.  
According to Theorem \ref{thm:MainThm} and Lemma \ref{lem:EstimateOnJ}, in order to prove \eqref{eq:HoelderConformality}, 
it suffices to show that all the terms in \eqref{eq:EstimateOnJ1} and \eqref{eq:EstimateOnJ2} have order $O(r^\alpha)$.  
This is obvious for the first terms.  
Choose $s=2$ and $p>2$ so that $\beta<\frac{2\beta}{2+\beta} \frac{1}{p}$.  
Let $\gamma=\frac{2}{2+\beta}$ and take $r'=r^\gamma$ in Lemma \ref{lem:EstimateOnIps}.  
Both terms on the right hand side of \eqref{eq:EstimateOnIps} have order $O(r^{\frac{2\beta}{2+\beta}})$, hence 
$I_{p,s}(\mu; r)^{\frac{1}{p}} = O(r^\alpha)$.  Thus \eqref{eq:HoelderConformality} is proved.  
\end{proof}

\section{Estimates on the integrals $J(\mu; z_1,z_2)$ and $I_{p,s}(r)$} 
\label{sec:EstimateOnJandIps}
\begin{proof} [Proof of Lemma \ref{lem:EstimateOnJ}] 
Let $\mu, p, s, \rho$ be as in Lemma \ref{lem:EstimateOnJ} and suppose $0<|z_2| \le \rho^2 |z_1|$. 
Since 
\begin{gather} \label{eq:QuadDiffDecomposition} 
\vphi_{z_1,z_2 \vphantom{|}}(z) + \frac{z_1}{z_1+z_2} \cdot \frac{1}{z^2} 
= \frac{z_1}{z_1+z_2}(\psi_1(z)+\psi_2(z)), 
\end{gather}
where 
$\psi_1(z)= \frac{1}{(z-z_1)(z-z_2)}$ and $\psi_2(z)= \frac{z_1 z_2}{z^2(z-z_1)(z-z_2)}$, 
the decomposition of the integral into $\D(\rho^{-1} |z_2|)= \{z \in \C: |z|<\rho^{-1} |z_2|\}$, 
$\D^*(\rho |z_1|)=\{z \in \C: |z|>\rho |z_1|\}$ 
and $A(\rho^{-1} |z_2|,  \rho |z_1|)$ gives  
\begin{equation} \label{eq:DecomposeIntegral} 
\begin{aligned}
&\left| \iint_{\C} \frac{\mu(z)\vphi_{z_1,z_2 \vphantom{|}}(z)}{1-|\mu(z)|^2}dx \tinyhsp dy 
+ \frac{z_1}{z_1+z_2}  \iint_{A(\rho^{-1} |z_2|,  \rho |z_1|)} \frac{\mu(z)}{1-|\mu(z)|^2} \frac{dx \tinyhsp dy}{z^2} \right| \\
\le &
\left| \iint_{\D(\rho^{-1} |z_2|)} \frac{\mu(z)\vphi_{z_1,z_2 \vphantom{|}}(z)}{1-|\mu(z)|^2}dx \tinyhsp dy \right| 
+ \left| \iint_{\D^*(\rho |z_1|)} \frac{\mu(z)\vphi_{z_1,z_2 \vphantom{|}}(z)}{1-|\mu(z)|^2}dx \tinyhsp dy \right| \\
& \ + \frac{1}{|1+\frac{z_2}{z_1}|} \left(\left| \iint_{A(\rho^{-1} |z_2|,  \rho |z_1|)} \frac{\mu(z)\psi_1(z)}{1-|\mu(z)|^2}dx \tinyhsp dy \right| 
+ \left| \iint_{A(\rho^{-1} |z_2|,  \rho |z_1|)} \frac{\mu(z)\psi_2(z)}{1-|\mu(z)|^2}dx \tinyhsp dy \right| \right).  
\end{aligned}
\end{equation}
It is easy to see that \eqref{eq:EstimateOnJ1} holds 
if one can prove that each term on the right hand side of \eqref{eq:DecomposeIntegral} is bounded by $I_{p,s}(\mu; |z_1|)^{\frac{1}{p}}$ up to a constant factor.  
Take $q$ such that $\frac{1}{p}+\frac{1}{q}=1$, then $1<q<2$.  
For any measurable set $D \subset \C$, denote by $I_{p,s}(\mu; r, D)$ the integral in \eqref{eq:DefnIps} with the 
domain $\C$ replaced by $D$.  
For a measurable set $D \subset \C$ and an integrable function $\psi(z)$ on $D$, 
the H\"older inequality yields 
\begin{equation} \label{eq:HoelderIneq}
\begin{aligned}
\left| \iint_{D} \frac{\mu(z)\psi(z)}{1-|\mu(z)|^2}dx \tinyhsp dy \right| 
&\le 
\iint_{D} \Biggl| \frac{\mu(z)}{(1-|\mu(z)|^2) |z|^{\frac{2}{p}} \left(1+ \frac{|z|}{r}\right)^{\frac{s}{p}}} \Biggr| 
\cdot \biggl| |z|^{\frac{2}{p}} \left(1+ \frac{|z|}{r}\right)^{\frac{s}{p}} \psi(z) \biggr|  dx \tinyhsp dy \\
&\le 
I_{p,s}(\mu; r, D)^{\frac{1}{p}}
H(\psi, r, D)^{\frac{1}{q}}, 
\end{aligned}
\end{equation}
where $H(\psi, r, D)= \displaystyle \iint_{D}  |z|^{2q-2} \left(1+ \frac{|z|}{r}\right)^{s(q-1)} |\psi(z)|^q dx \tinyhsp dy$. 
In order to estimate the terms in \eqref{eq:DecomposeIntegral}, we apply \eqref{eq:HoelderIneq} with 
$r=|z_1|$, $\psi = \vphi_{z_1,z_2 \vphantom{|}}$, $\psi_1$, $\psi_2$,  and $D=\D(\rho^{-1} |z_2|)$, $\D^*(\rho |z_1|))$, $A(\rho^{-1} |z_2|, \rho |z_1|)$.  
It suffices to show that the corresponding $H(\psi, r, D)$ is finite.  
\par 
For the first term of the right hand side of \eqref{eq:DecomposeIntegral}, 
we now give an estimate on $H(\psi, r, D)$ for 
$\psi(z)=\vphi_{z_1,z_2 \vphantom{|}}(z)=-\frac{1}{z(z-z_2)(1-z/z_1)}$,  $r=|z_1|$ and $D=\D(\rho^{-1} |z_2|)$.  
By the change of variable $z=z_2 \zeta$, $\zeta=\xi + i \eta$, we have 
\begin{equation}
\begin{aligned} \label{eq:TopEndEstimate}
H(\vphi_{z_1,z_2 \vphantom{|}}, |z_1|, \D(\rho^{-1} |z_2|))
&= 
\iint_{ \{ |\zeta| < \frac{1}{\rho} \} } \frac{ |z_2|^{2q-2} |\zeta|^{2q-2} \left(1+ \frac{|z_2| |\zeta|}{|z_1|}\right)^{s(q-1)}}
{|z_2 \zeta (z_2 \zeta -z_2)(1- \frac{z_2 \zeta}{z_1})|^q}  |z_2|^2 d\xi \tinyhsp d\eta \\
&\le 
\frac{(1+\rho)^{s(q-1)}}{(1-\rho)^q} \iint_{ \{ |\zeta| < \frac{1}{\rho} \} } \frac{  |\zeta|^{q-2} }{|\zeta -1|^q} d\xi \tinyhsp d\eta 
=: H_1(\rho).
\end{aligned}
\end{equation}
The last integral converges, because its integrand has order $|\zeta|^{q-2}$ near $\zeta=0$ with $q-2>-2$ 
and order $|\zeta-1|^{-q}$ near $\zeta=1$ with $-q>-2$.  
\par
Similarly, setting either $z=z_1 \zeta$ or $z=z_2 \zeta$, we have 
\begin{align} 
&\begin{aligned} \label{eq:BottomEndEstimate}
H(\vphi_{z_1,z_2 \vphantom{|}}, |z_1|, \D^*(\rho |z_1|))
&= 
\iint_{\{|\zeta|>\rho\}} \frac{ |z_1|^q |z_1|^{2q-2} |\zeta|^{2q-2} \left(1+  |\zeta| \right)^{s(q-1)}}
{|z_1^3 \zeta^2 (1 - \frac{z_2}{z_1 \zeta})(\zeta - 1)|^q}  |z_1|^2 d\xi \tinyhsp d\eta \\
&\le 
\frac{1}{(1-\rho)^q} \iint_{\{|\zeta|>\rho\}} \frac{ (1+ |\zeta|)^{s(q-1)} }{ |\zeta|^2 |\zeta -1|^q} d\xi \tinyhsp d\eta
=:H_2(\rho).
\end{aligned} \displaybreak[0] \\
&\begin{aligned} \label{eq:Middle1Estimate}
H( \psi_1, |z_1|, A(\rho^{-1} |z_2|, \rho |z_1|) ) 
&= 
\iint_{ \{ \frac{|z_2|}{\rho |z_1|} \le |\zeta| \le \rho \} } \frac{ |z_1|^{2q-2} |\zeta|^{2q-2} \left(1+  |\zeta| \right)^{s(q-1)}}
{|z_1^2 \zeta (1 - \frac{z_2}{z_1 \zeta})(\zeta - 1)|^q}  |z_1|^2 d\xi \tinyhsp d\eta \\
&\le 
\frac{(1+\rho)^{s(q-1)}}{(1-\rho)^q} \iint_{\{ |\zeta| \le \rho  \} } \frac{ |\zeta|^{q-2}}{  |\zeta -1|^q} d\xi \tinyhsp d\eta 
=:H_3(\rho).
\end{aligned} \displaybreak[0] \\
&\begin{aligned} \label{eq:Middle2Estimate}
H( \psi_2, |z_1|, A(\rho^{-1} |z_2|, \rho |z_1|) ) 
&= 
\iint_{ \{ \frac{1}{\rho} \le |\zeta| \le \frac{\rho |z_1|}{|z_2|}\} } \frac{ |z_2|^{3q-2} |\zeta|^{2q-2} \left(1+ \frac{|z_2| |\zeta|}{|z_1|}\right)^{s(q-1)}}
{|z_2^3 \zeta^2 (\zeta - 1)(1- \frac{z_2 \zeta}{z_1})|^q}  |z_2|^2 d\xi \tinyhsp d\eta \\
&\le 
\frac{(1+\rho)^{s(q-1)}}{(1-\rho)^{2q}} \iint_{ \{ \rho^{-1} \le  |\zeta| \} } \frac{ 1 }{|\zeta|^2 |\zeta -1|^q} d\xi \tinyhsp d\eta 
=: H_4(\rho).
\end{aligned}
\end{align}
Again the integrals $H_j(\rho)$ converge, 
for example for $j=2$, its integrand has order $|\zeta|^{s(q-1)-2-q}$ near $\zeta=\infty$ 
with $s(q-1)-2-q=\frac{q}{p}(s-p)-2 < -2$ 
and order $|\zeta-1|^{-q}$ near $\zeta=1$ with $-q>-2$.  
The cases of $j=3,4$ are left to the reader.  
\par 
Thus by \eqref{eq:DecomposeIntegral}, \eqref{eq:HoelderIneq} and \eqref{eq:TopEndEstimate}--\eqref{eq:Middle2Estimate}, 
there exists $C'=C'(p, s, \rho)$ such that  \eqref{eq:EstimateOnJ1} holds.  
For \eqref{eq:EstimateOnJ2}, replace $\mu(z)$ in the numerator by 
$|\mu(z)|^2 |\vphi_{z_1,z_2 \vphantom{|}}(z)|/\vphi_{z_1,z_2 \vphantom{|}}(z)$ 
and use $|\mu(z)|^2 \le |\mu(z)|$ to obtain similar estimates.  
In fact, we can use the same constant $C'$.  
Thus Lemma \ref{lem:EstimateOnJ} is proved.  
\end{proof}

\begin{rem} If we assume \eqref{eq:TWBL} in Theorem \ref{thm:TWB}, 
then it is also possible to show \eqref{JtendsTo0} by estimating $\iint (K_f(z) -1) |\vphi_{z_1,z_2 \vphantom{|}}(z)| dx \tinyhsp dy$ 
which is divided into several regions defined by $|z| \ge \rho^{-1} |z_1|$, $|z-z_1| \le \rho |z_1|$, $|z-z_2| \le \rho |z_2|$ and the rest, 
where $0<\rho <1$ will need to be chosen small according to the target $\eps$. 
\end{rem}

\begin{proof} [Proof of Lemma \ref{lem:EstimateOnIps}] 
Since $K+\frac{1}{K}+2=\frac{4}{1-||\mu||_\infty^2}$ and $K-\frac{1}{K}=\frac{4||\mu||_\infty}{1-||\mu||_\infty^2}$, 
the integrand in $I_{p,s}(\mu; r)$ is bounded by both 
\begin{equation*}
\left( \frac{K+\frac{1}{K}+2}{4} \right) \! \left( \frac{K-\frac{1}{K}}{4} \right)^{p-2} \! \frac{|\mu(z)|^2}{1-|\mu(z)|^2}\, \frac{1}{|z|^2}
\quad \text{  and  } \quad 
\left( \frac{K-\frac{1}{K}}{4} \right)^{p}\!  \frac{1}{|z|^2 \left( \frac{|z|}{r} \right)^s}. 
\end{equation*}
Integrating over $\{|z|<r'\}$ and $\{|z| \ge r'\}$, we immediately obtain \eqref{eq:EstimateOnIps}.  
Hence $I_{p,s}(\mu; r)$ is finite by the assumption \eqref{eq:muSquareIntegrability}.  
\par 
One can make the first term of the right hand side of \eqref{eq:EstimateOnIps} small by choosing $r'$ small, then make the second term small 
by choosing $r$ even smaller.  
Therefore $\lim_{r \searrow 0} I_{p,s}(\mu; r) =0$.  
\end{proof}

\appendix
\section*{Appendex A. Proof of Theorem \ref{thm:FundIneq}: Gr\"otzsch-type inequality}

We prove Theorem \ref{thm:FundIneq} closely following Ahlfors \cite{AhlforsQC} Chap. III.D, 
but improving the detail.  The difference is that we do not replace $|1+\tilde{\mu}|$ by $1+|\tilde{\mu}|$ 
in \eqref{eq:LengthAreaIneqOnTorus} below.  
\par 
Given $(z_1, z_2, z_3, z_4)$, there exist $\tau \in \C$ with $\Im \tau>0$ and 
a holomorphic branched double covering $p: \E_{\tau} \to \CC$ 
branching over these four points, where $\E_{\tau} = \C/(\Z+\Z \tau)$.  
(If $z_4=\infty$, $p$ can be taken as the Weierstrass $\wp$-function.) 
Then $p(w)$ satisfies $p'(w)^2 = c \prod_{j} (p(w) -z_j) = \frac{c}{\vphi(p(w))}$ for some $c \in \C \sminus \{0\}$. 
There exist a counterpart $\hat{p}: \E_{\tau'} \to \CC$ for $(z_1', z_2', z_3', z_4')$, and a lift $\tilde{f}: \E_{\tau} \to \E_{\tau'}$, 
sending generators $1, \tau$ to $1, \tau'$ and satisfying $f \circ p = \hat{p} \circ \tilde{f}$.  
For $z=\vphi(w)$, $\mu=\mu_f$, $\tilde{\mu} = \mu_{\tilde{f}}$, we have 
$|\vphi(z)| dx\tinyhsp dy = |\vphi(p(w))| |p'(w)|^2 du\tinyhsp dv= |c| \tinyhsp du \tinyhsp dv$ and 
$\tilde{\mu}(w) \frac{c}{|c|} = \mu(p(w)) \frac{\, \overline{p'(w)} \,}{p'(w)} \cdot \frac{ p'(w)^2 \vphi(p(w)) }{| p'(w)^2 \vphi(p(w)) |} 
= \mu(z) \frac{\vphi(z)}{|\vphi(z)|}$. 
Therefore the double cover $p$ gives 
\begin{equation} \label{eq:FundIneqTorus}
\overline{K}_f(z_1,z_2,z_3,z_3) =  \sup_{\theta \in \R} K_{\tilde{f}, \theta}, 
\quad \text{ where } 
K_{\tilde{f}, \theta} = 
\frac{ \iint_{\E_{\tau}} \frac {\, \left|1+ e^{i\theta} \tilde{\mu}(w) \right|^2}{1-|\tilde{\mu}(w)|^2} du \tinyhsp dv}
{\Area(\E_{\tau})}, 
\end{equation}
and $\Area(\E_{\tau})=\iint_{\E_{\tau}} du \tinyhsp dv=\Im \tau$.  
\par 
We now follow the standard Gr\"otzsch argument: 
The map $\tilde{f}$ sends each horizontal curve on $\E_{\tau}$ to a closed curve homotopic to 
a horizontal curve in $\E_{\tau'}$.  Since $\tilde{f}$ is absolutely continuous along almost all horizontal lines, we have $\int_0^1 |\tilde{f}_u(u+iv)|du \ge 1$ for a.a. $v$.  
By integrating over $v \in [0, \Im \tau]$ and using $\tilde{f}_u=\tilde{f}_w+\tilde{f}_{\overline{w}}=(1+\tilde{\mu}) \tilde{f}_w$, we have 
\begin{equation*}
\Im \tau \le \iint_{\E_{\tau}} |(1+\tilde{\mu}) \tilde{f}_w | du \tinyhsp dv.
\end{equation*}
Cauchy-Schwarz inequality together with 
$\Jac \tilde{f} = |\tilde{f}_w|^2-|\tilde{f}_{\overline{w}}|^2 = |\tilde{f}_w|^2 (1-|\tilde{\mu}|^2)$ 
implies 
\begin{equation} \label{eq:LengthAreaIneqOnTorus}
\Im \tau^2 \le \iint_{\E_{\tau}} \Jac \tilde{f} \tinyhsp du \tinyhsp dv 
\iint_{\E_{\tau}} \frac{|1+\tilde{\mu}|^2}{1-|\tilde{\mu}|^2} du \tinyhsp dv
\le \Im \tau'  \iint_{\E_{\tau}} \frac{|1+\tilde{\mu}|^2}{1-|\tilde{\mu}|^2} du \tinyhsp dv.
\end{equation}
Hence we have $\Im \tau \le K_{\tilde{f}, 0} \, \Im\tau'$, which means that $\tau$ is not contained in the open horodisk 
which is tangent to $\partial \HH$ at $\infty$ and has distance $\log K_{\tilde{f}, 0}$ to $\tau'$. 
\par 
If we change the generators of $\Z +\Z\tau$ from $1, \tau$ to $c\tau+d$, $a\tau+b$ with 
$A= \left( \begin{smallmatrix} a & b \\ c & d \end{smallmatrix} \right) \in SL(2, \Z)$, then we obtain an estimate 
on $\Im \frac{a\tau+b}{c\tau+d}$ and $\Im \frac{a\tau'+b}{c\tau'+d}$, 
and it has an effect of rotating the horizontal axis and 
$\tilde{\mu}$ in the integral should be replaced by $e^{i \theta}\tilde{\mu}$ for some $\theta \in \R$.  
Thus we obtain $\Im \frac{a\tau+b}{c\tau+d} \le K_{\tilde{f}, \theta} \, \Im \frac{a\tau'+b}{c\tau'+d}$, 
which means  $\tau$ is not in the open horodisk which is tangent to $\partial \HH$ at $A^{-1}(\infty)= - \frac{d}{c}$ 
and has distance $\log K_{\tilde{f}, \theta}$ to $\tau'$.  
If $\tau$ had distance greater than $\log \sup_{\theta \in \R} K_{\tilde{f}, \theta}$ from $\tau'$, 
then $\tau$ would be in one of the horodisks as above, 
because $A^{-1}(\infty)$ ($A \in SL(2, \Z)$) are dense on $\partial \HH$.  
Hence we conclude that $d_{\HH}(\tau, \tau') \le \log \overline{K}_f(z_1,z_2,z_3,z_3)$.  
Finally the cross-ratio $\Cr(z_1, z_2, z_3, z_4)=\lambda(\tau)$ as a function of $\tau$ is 
the elliptic modular function $\lambda: \HH \to \Omega$, 
which is a universal covering map.  
(See \cite{AhlforsQC}.) 
Therefore $d_{\Omega}(\lambda(\tau), \lambda(\tau')) \le  d_{\HH}(\tau, \tau')$ and 
Theorem \ref{thm:FundIneq} is proved.  
\hfill \qed


\end{document}